\newtheorem{theorem}{Theorem}
\newtheorem{lemma}[theorem]{Lemma}
\begin{document}

\title{deficiency and commensurators}
 
\author{J.A.Hillman}
\address{School of Mathematics and Statistics\\
     University of Sydney, NSW 2006\\
      Australia }

\email{jonathan.hillman@sydney.edu.au}

\begin{abstract}
We show that if $\pi$ is the fundamental group of a 4-dimensional
infrasolvmanifold then $-2\leq{def(\pi)}\leq0$, 
and give examples realizing each value allowed by our constraints,
for each possible value of the rank of $\pi/\pi'$.
We also determine the abstract commensurators of such groups.
Finally we show that if $G$ is a finitely generated group the kernel 
of the natural homomorphism from $G$ to its abstract commensurator 
$Comm(G)$ is locally nilpotent by locally finite,
and is finite if $\mathrm{def}(G)>1$. 
\end{abstract}

\keywords{commensurate, deficiency, infrasolvmanifold, locally finite, 4-manifold, virtually poly-$Z$}

\subjclass{20F16, 20F99}

\maketitle

In this note we consider the group-theoretic notion of deficiency  
in two rather different contexts: estimating the deficiencies of the
fundamental groups of 4-dimensional infrasolvmanifolds, 
and in connection with the abstract commensurator of a group.

It is well known that most fundamental groups of closed 3-manifolds
(those without free factors) have balanced presentations,
and thus have deficiency 0.
Every finitely presentable group is the fundamental group
of a closed orientable 4-manifold, and so no such general result 
can be expected in dimension 4 (or higher).
The class of 4-dimensional infrasolvmanifolds is of interest in a variety of contexts, and such manifolds $M$ may be characterized topologically 
by the conditions $\chi(M)=0$ and $\pi=\pi_1(M)$ is virtually poly-$Z$ 
of Hirsch length 4.
(See Theorem 8.1 of \cite{Hi}.)
Since $M$ is aspherical, 
$\pi$ is in fact a $PD_4$-group with $\chi(\pi)=0$,
and is torsion-free.

The first section gives the notation used later.
In \S2 there are a number of simple lemmas which enable us to estimate deficiencies.
Our main results on 4-dimensional infrasolvmanifold groups are in \S3.
We show that $-2\leq{def(\pi)}\leq0$, determine $def(\pi)$ 
when $\beta=\beta_1(\pi;\mathbb{Q})\geq3$, 
and give examples realizing each 
of the three possibilities in the other cases.
(When $\beta=1$ or 2 finding the precise value may involve 
determining conjugacy in $GL(k,\mathbb{Z})$ for $k=2$ or 3, 
which leads to issues of ideal classes in orders 
of quadratic or cubic number fields.
We will not pursue this.)
We also give some simple estimates for poly-$Z$ groups 
of Hirsch length $n>4$.
In \S4 we determine the abstract commensurators of these groups
in terms of the automorphism group of the rational Mal'cev completion 
of the Hirsch-Plotkin radical $\sqrt\pi$
(here the maximal nilpotent normal subgroup).

In the final section we show that if $G$ is finitely generated 
then the kernel $K_G$ of the natural homomorphism from $G$ 
to its abstract commensurator is locally nilpotent by locally finite,
and that if $G$ is finitely presentable and has deficiency $\geq1$ 
then either $K_G$ is finite or $G$ is commensurable 
with $\mathbb{Z}\times{F(2)}$ or $\mathbb{Z}^2$.

I would like to thank R. Inanc Baykur for raising the question which prompted 
the first part of this work.

\section{notation and terminology}

Let $R$ be a ring and $R\Lambda=R[t,t^{-1}]$.
(When $R=\mathbb{Z}$ we write $\Lambda$ instead of $\mathbb{Z}\Lambda$.)
Let $U(n,R)$ be the subgroup of $SL(n,R)$ consisting of upper triangular matrices with 
all diagonal entries 1. (This notation is from \cite{Ro}.)

If $S$ is a subset of a group $G$ let $\langle\langle{S}\rangle\rangle_G$ 
be the normal closure of $S$ in $G$.
Let $G'$, $\zeta{G}$ and $\sqrt{G}$ be the commutator subgroup, 
centre and Hirsch-Plotkin radical (maximal locally-nilpotent, 
normal subgroup) of $G$, respectively.
Let $F(r)$ be the free group of rank $r$.
We shall say that $G$ is $s$-generated if there is an epimorphism 
from $F(s)$ onto $G$,
i.e., if it can be generated by $s$ elements.

A $PD_n$-group $G$ is a Poincar\'e duality group of formal dimension $n$.
Thus the augmentation $\mathbb{Z}[G]$-module has a finite projective resolution of length $n$, $H^i(G;\mathbb{Z}[G])=0$ for $i<n$ and
$H^n(G;\mathbb{Z}[G])$ is infinite cyclic.
(See Chapter III of \cite{Bi}.)
Such a group is {\it orientable over\/} a ring $R$ if $H_n(G;R)\cong{R}$.
If $G$ is a $PD_n$-group with orientation character 
$w:G\to\mathbb{Z}^\times$ and $M$ is a right $R[G]$-module then 
$\overline{M}$ is the left module with structure $g.m=w(g)mg^{-1}$,
for $g\in\pi$ and $m\in{M}$.
If $R=\mathbb{Z}$ or is a field then $\beta_i(G;R)$ is the rank of 
the $R$-module $H_i(G;R)$.

Let $G_1=\mathbb{Z}^3,G_2,G_3$, $G_4$, $G_5$ and $G_6$ and 
$B_1=\mathbb{Z}\times\pi_1(Kb),B_2,B_3$ and $B_4$
be the six orientable and four non-orientable flat 3-manifold groups,
respectively. (These are the torsion-free groups which are virtually 
$\mathbb{Z}^3$.)

Let $\Gamma_q$ be the nilpotent group of class 2 with presentation
\[
\langle{x,y,z}\mid{xz=zx,~yz=zy,~[x,y]=z^q}\rangle,
\]
for some $q\geq1$. 
We shall write $\Gamma$ instead of $\Gamma_1$.

Two groups are {\it commensurable\/} if they have subgroups of finite index 
which are isomorphic.

\section{some elementary lemmas}

The first lemma is straightforward. 

\begin{lemma}
Let $\pi$ be a $PD_4$-group which is orientable over  $R$,
where $R=\mathbb{Z}$ or is a field, and such that $\chi(\pi)=0$. 
Then 
\[def(\pi)\leq2-\beta_1(\pi;R).
\]
\end{lemma}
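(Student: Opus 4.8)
The plan is to estimate the deficiency of a finitely presentable group by bounding it in terms of homology, using the standard fact that a presentation with $g$ generators and $r$ relators gives a $2$-complex whose fundamental group is the group in question. Recall that for any finitely presentable group $G$, the deficiency satisfies $def(G)\leq\beta_1(G;R)-\beta_2(G;R)$ for any field $R$, and a similar inequality holds over $\mathbb{Z}$ after accounting for torsion. So my first step is to reduce the problem to showing that $\beta_2(\pi;R)\geq\beta_1(\pi;R)-2$, equivalently that the second Betti number is at least as large as the first minus two.

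The key input is Poincar\'e duality. Since $\pi$ is a $PD_4$-group orientable over $R$, we have $H_k(\pi;R)\cong H^{4-k}(\pi;R)$ (as $R$-modules, using the orientation), and when $R$ is a field these are dual vector spaces, so $\beta_k=\beta_{4-k}$. In particular $\beta_3=\beta_1$ and $\beta_4=\beta_0=1$ (the latter since $\pi$ is orientable over $R$ and $R$ is a field or $\mathbb{Z}$). Next I would bring in the Euler characteristic hypothesis $\chi(\pi)=0$. Writing out the alternating sum of Betti numbers, $\chi(\pi)=\beta_0-\beta_1+\beta_2-\beta_3+\beta_4=0$, and substituting the duality relations $\beta_0=\beta_4=1$ and $\beta_1=\beta_3$, this collapses to $2-2\beta_1+\beta_2=0$, so $\beta_2=2\beta_1-2$. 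Thus $\beta_2-\beta_1=\beta_1-2$, which is exactly the combination appearing in the deficiency bound.

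Combining these gives $def(\pi)\leq\beta_1-\beta_2=\beta_1-(2\beta_1-2)=2-\beta_1(\pi;R)$, which is the desired inequality. When $R=\mathbb{Z}$ rather than a field, I would run the same Euler characteristic computation using ranks of the integral homology groups (the torsion subgroups contributing zero to $\chi$), and note that the deficiency bound $def(G)\leq\mathrm{rk}\,H_1(G;\mathbb{Z})-\mathrm{rk}\,H_2(G;\mathbb{Z})$ still holds, so the argument goes through verbatim with $\beta_i$ denoting integral ranks; over $\mathbb{Z}$ we should also confirm that Poincar\'e duality with $\mathbb{Z}$ coefficients preserves the ranks even in the presence of torsion, which follows from the universal coefficient theorem.

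I do not expect a serious obstacle here, as the lemma is flagged as straightforward; the only point requiring a little care is making sure the deficiency inequality $def(G)\leq\beta_1-\beta_2$ is applied in the correct form over $R$. The cleanest route is to observe that a deficiency-$d$ presentation yields a chain complex with $1$, $g$, and $r$ free generators in degrees $0,1,2$ computing $H_*(G;R)$ in low degrees, whence $\beta_1-\beta_2\geq g-r=d$ by a rank count on the boundary maps; this is the one step where I would be slightly careful to phrase the inequality correctly rather than reversed.
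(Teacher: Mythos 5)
Your proposal is correct and follows essentially the same route as the paper, which likewise combines the standard estimate $def(\pi)\leq\beta_1(\pi;R)-\beta_2(\pi;R)$ with the Euler characteristic identity $\chi(\pi)=\Sigma_{i=0}^4(-1)^i\beta_i(\pi;R)$ and Poincar\'e duality. You simply fill in the arithmetic ($\beta_0=\beta_4=1$, $\beta_1=\beta_3$, hence $\beta_2=2\beta_1-2$) and the $R=\mathbb{Z}$ bookkeeping that the paper leaves implicit.
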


\begin{proof}
This follows from the standard estimate $def(\pi)\leq\beta_1(\pi;R)-\beta_2(\pi;R)$,
together with the equation $\chi(\pi)=\Sigma_{i=0}^4(-1)^i\beta_i(\pi;R)$ and Poincar\'e duality.
\end{proof}

The next lemma is an immediate consequence of Theorem 2.5 of \cite{Hi}.

\begin{lemma}
Let $\pi$ be a group with first $L^2$ Betti number $\beta_1^{(2)}(\pi)=0$ and $c.d.\pi>2$.
Then $def(\pi)\leq0$.
\qed
\end{lemma}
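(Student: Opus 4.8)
The plan is to run the standard generator–relator count for a deficiency-realizing presentation at the level of $L^2$-homology, and then to use the hypothesis $c.d.\pi>2$ to exclude the borderline case. We may assume $\pi$ is infinite, since a finite group already satisfies $def\leq0$ by the standard estimate $def(\pi)\leq\beta_1(\pi;\mathbb{Q})-\beta_2(\pi;\mathbb{Q})=0$; thus $\beta_0^{(2)}(\pi)=0$. I would choose a presentation realizing $def(\pi)=g-r$, with $g$ generators and $r$ relators, and let $X$ be the associated finite $2$-complex, so that $\pi_1(X)\cong\pi$, $\chi(X)=1-g+r$, and the cellular chain complex of the universal cover $\widetilde{X}$ has the form $\mathbb{Z}[\pi]^r\to\mathbb{Z}[\pi]^g\to\mathbb{Z}[\pi]$, with $\partial_2\colon\mathbb{Z}[\pi]^r\to\mathbb{Z}[\pi]^g$ the top boundary.

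The key input is that $\beta_0^{(2)}$ and $\beta_1^{(2)}$ depend only on the maps $\partial_1,\partial_2$, hence only on the $2$-skeleton; since $B\pi$ may be built from $X$ by attaching cells of dimension $\geq3$, these additions leave $\partial_1,\partial_2$ untouched, giving $\beta_0^{(2)}(X)=0$ and $\beta_1^{(2)}(X)=\beta_1^{(2)}(\pi)=0$. Applying the $L^2$-index theorem, which identifies the ordinary Euler characteristic with the $L^2$-Euler characteristic, I would write
\[
1-g+r=\chi(X)=\beta_0^{(2)}(X)-\beta_1^{(2)}(X)+\beta_2^{(2)}(X)=\beta_2^{(2)}(X)\geq0 .
\]
This already yields $def(\pi)=g-r\leq1$. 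Suppose for contradiction that $def(\pi)=1$; then the displayed line forces $\beta_2^{(2)}(X)=0$.

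It remains to upgrade the vanishing of $\beta_2^{(2)}(X)$ to asphericity of $X$, and this is the step I expect to be the main obstacle. Since $\partial_2$ is the only source of $2$-dimensional $L^2$-homology, $\beta_2^{(2)}(X)=0$ says that the kernel of $\partial_2$ acting on $\ell^2(\pi)^r$ has von Neumann dimension $0$; by faithfulness of $\dim_{\mathcal{N}(\pi)}$ on closed invariant subspaces, this kernel is in fact trivial, so $\partial_2$ is injective on $\ell^2(\pi)^r$. As $\mathbb{Z}[\pi]^r\subset\ell^2(\pi)^r$, the integral kernel $\pi_2(X)=H_2(\widetilde{X};\mathbb{Z})=\ker(\partial_2\colon\mathbb{Z}[\pi]^r\to\mathbb{Z}[\pi]^g)$ vanishes as well. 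A $2$-complex with trivial $\pi_2$ is aspherical, so $X$ is a $K(\pi,1)$ and $c.d.\pi\leq\dim X=2$, contradicting $c.d.\pi>2$. Hence $def(\pi)\leq0$. In practice this entire chain is packaged in Theorem 2.5 of \cite{Hi}, so one would simply quote that result; the reconstruction above is how I would justify it, the faithfulness-of-dimension step being the only nonformal ingredient.
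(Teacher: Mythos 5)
Your proposal is correct and matches the paper exactly: the paper's entire proof is the citation of Theorem 2.5 of \cite{Hi}, which you also invoke, and your reconstruction (finite $2$-complex realizing the deficiency, $\chi(X)=\beta_2^{(2)}(X)\geq0$ since $\beta_0^{(2)}=\beta_1^{(2)}=0$, then in the borderline case faithfulness of the von Neumann dimension forces $\ker\partial_2=0$, hence $\pi_2(X)=0$, $X$ aspherical, and $c.d.\pi\leq2$) is precisely the standard proof of that cited theorem. No gaps; the finite-group case is also correctly dispatched.
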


If $\pi$ is a finitely presentable group with $def(\pi)>0$ 
then every subgroup of finite index also has positive deficiency, 
by the multiplicativity of the Euler characteristic in finite covers.
The corresponding result when $def(\pi)\leq0$ is somewhat weaker.

\begin{lemma}
If $G$ is a finitely presentable group with $def(G)\leq0$ and $H$ is a subgroup 
of finite index in $G$ then 
\[
def(H)\geq[G:H]def(G)+1-[G:H].
\]
\end{lemma}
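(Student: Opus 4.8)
The plan is to translate deficiency into Euler characteristic via presentation $2$-complexes and then exploit the multiplicativity of $\chi$ in finite covers. Recall that a finite presentation of $G$ with $g$ generators and $r$ relators has presentation complex $X$ with one $0$-cell, $g$ $1$-cells and $r$ $2$-cells, so that $\chi(X)=1-g+r=1-(g-r)$; hence $def(G)=1-\chi(X)$ for a presentation realizing the deficiency. Since $G$ is finitely presentable its deficiency is bounded above (for instance $g-r\leq\beta_1(G;\mathbb{Q})$ by abelianizing the relation matrix) and integer-valued, so it is attained. I would therefore fix such an optimal presentation together with the associated finite connected $2$-complex $X$, for which $\pi_1(X)\cong G$ and $\chi(X)=1-def(G)$.

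Write $n=[G:H]$. The subgroup $H$ corresponds to a connected $n$-fold covering $\tilde X\to X$ with $\pi_1(\tilde X)\cong H$. Lifting the cell structure, $\tilde X$ has $n$ $0$-cells, $ng$ $1$-cells and $nr$ $2$-cells, so $\chi(\tilde X)=n\chi(X)=n(1-def(G))$. Now $\tilde X$ is not itself a presentation complex, since it has more than one vertex, so the key step is to collapse a spanning tree $T$ of its $1$-skeleton. As $T$ has $n$ vertices it contains exactly $n-1$ edges, and collapsing it is a homotopy equivalence onto a $2$-complex $Y$ with a single $0$-cell, $ng-(n-1)$ $1$-cells, $nr$ $2$-cells and $\pi_1(Y)\cong H$. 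Reading $Y$ as a presentation complex exhibits a presentation of $H$ with $ng-n+1$ generators and $nr$ relators, whence $def(H)\geq(ng-n+1)-nr=n\,def(G)+1-n$, which is the asserted bound.

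The argument is essentially bookkeeping, so the only points needing care are that the cover corresponding to $H$ is connected (so that $\chi$ is genuinely multiplied by $n$) and that the correction term $1-[G:H]$ is precisely accounted for by the $n-1$ edges of the spanning tree. I would remark in passing that the hypothesis $def(G)\leq0$ is not in fact needed for the inequality itself: the same computation is valid whatever the sign of $def(G)$, and when $def(G)>0$ it recovers the fact quoted just before the statement, that finite-index subgroups again have positive deficiency. The hypothesis merely identifies the regime in which this estimate, whose lower bound worsens as the index grows, is the sharpest statement one can make.
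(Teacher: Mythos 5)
Your proof is correct and follows essentially the same route as the paper: take a presentation $2$-complex $X$ realizing $def(G)$, pass to the $[G:H]$-fold cover corresponding to $H$, and use multiplicativity of $\chi$ together with $def(H)\geq 1-\chi(X_H)$. Your collapse of a spanning tree simply makes explicit the bookkeeping the paper leaves implicit, and your closing remark that the hypothesis $def(G)\leq 0$ is not actually used in the argument is accurate.
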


\begin{proof}
Let $X$ be the 2-complex associated to a presentation of $G$ with optimal deficiency,
and let $X_H$ be the covering space corresponding to $H$.
Then $\chi(X)=1-def(G)$ and $\chi(X_H)=[G:H]\chi(X)$,
so $def(H)\geq1-\chi(X_H)=[G:H]def(G)+1-[G:H]$.
\end{proof}

\begin{lemma}
Let $\pi=K\rtimes\mathbb{Z}$, where $K$ is a finitely presentable group.
If $K$ is $s$-generated then $\pi$ is $(s+1)$-generated and $def(\pi)\geq1+def(K)-s$.
\end{lemma}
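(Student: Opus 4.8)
The plan is to realize $\pi$ as the mapping torus of the automorphism $\alpha\in\mathrm{Aut}(K)$ determining the splitting $\pi=K\rtimes_\alpha\mathbb{Z}$, and to build an explicit presentation of $\pi$ from an economical presentation of $K$. First I would fix generators $x_1,\dots,x_s$ of $K$, which exist since $K$ is $s$-generated; adjoining a generator $t$ mapping to a generator of the quotient $\mathbb{Z}$ shows at once that $\pi$ is $(s+1)$-generated, disposing of the first assertion.

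For the deficiency bound I would take a finite presentation $\langle x_1,\dots,x_s\mid\rho_1,\dots,\rho_r\rangle$ of $K$ on these $s$ generators and form
\[
\pi=\langle x_1,\dots,x_s,t\mid\rho_1,\dots,\rho_r,\ tx_it^{-1}\alpha(x_i)^{-1}\ (1\le i\le s)\rangle,
\]
each $\alpha(x_i)$ being rewritten as a word in $x_1,\dots,x_s$. The step I would then verify is that this really presents $K\rtimes_\alpha\mathbb{Z}$: the relations $tx_it^{-1}=\alpha(x_i)$ make the subgroup generated by the $x_i$ normal with stable letter $t$ acting as $\alpha$, and since $\alpha$ is an automorphism (so $t^{-1}x_it=\alpha^{-1}(x_i)$ is again a word in the $x_i$) the quotient by that subgroup is infinite cyclic. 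This is the standard semidirect-product presentation and is routine. Counting, it has $s+1$ generators and $r+s$ relators, so $def(\pi)\ge(s+1)-(r+s)=1-r$.

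The remaining step is the bookkeeping that converts $1-r$ into $1+def(K)-s$, and this holds precisely when the chosen presentation of $K$ realizes its deficiency, i.e.\ when $r=s-def(K)$. I therefore expect the main obstacle to be not the mapping-torus construction but arranging that the deficiency of $K$ is attained on an $s$-element generating set. When some deficiency-realizing presentation of $K$ already uses at most $s$ generators, I would simply pad it with trivial generator--relator pairs up to $s$ generators without changing the deficiency, and then $1-r=1+def(K)-s$ as required; this is the situation relevant here, where $K$ is (virtually) poly-$Z$ and has well-understood efficient presentations. I would flag that if $s$ were strictly smaller than the generator number of every efficient presentation, the argument would only yield the weaker bound $1+def(K)-g$, with $g$ the number of generators actually used, so the clean statement presumes $s$ at least that large.
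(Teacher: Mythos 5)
Your mapping-torus construction is the same device the paper uses, and the first assertion ($(s+1)$-generation) is fine, but there is a genuine gap in the deficiency count, and it is exactly the one you flag at the end: you add one conjugation relation $tx_it^{-1}=\alpha(x_i)$ for \emph{each generator of the chosen presentation} of $K$, so to get the stated bound you need a presentation that realizes $def(K)$ on at most $s$ generators. Nothing in the hypotheses guarantees this: padding with trivial generator--relator pairs only increases the number of generators, and if every deficiency-realizing presentation of $K$ needs $g>s$ generators your argument yields only $1+def(K)-g$, which is strictly weaker than the lemma. Your fallback -- that this is harmless ``where $K$ is (virtually) poly-$Z$'' -- is not substantiated and does not match how the lemma is stated or used: the lemma is asserted for arbitrary finitely presentable $K$, and even in the paper's own application the deficiency-$0$ presentations of Hirsch length $3$ groups come from handle decompositions (Heegaard splittings), whose generator number can exceed the rank of the group, so minimal generation and optimal deficiency are not known to be realized by one and the same presentation.

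The missing idea, which is what the paper's proof exploits (note its phrasing: one relation ``for each $x$ in a \emph{set of generators} of $K$,'' not for each generator of the presentation), is that only $s$ conjugation relations are needed no matter how many generators the presentation of $K$ has. Take any optimal presentation $\langle x_1,\dots,x_g\mid\rho_1,\dots,\rho_r\rangle$ with $g-r=def(K)$, choose elements $a_1,\dots,a_s$ generating $K$, written as words $u_j$ in the $x_i$, and set
\[
G=\langle x_1,\dots,x_g,t\mid \rho_1,\dots,\rho_r,\ tu_jt^{-1}=v_j\ (1\leq j\leq s)\rangle,
\]
where $v_j$ represents $\alpha(a_j)$. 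This really presents $\pi$: each $x_i$ equals a word in the $a_j$ modulo the relators $\rho_k$, so $tx_it^{-1}$ lies in the subgroup $N$ generated by the $x_i$ as a consequence of the listed relations; and since $\alpha$ is an automorphism the $\alpha(a_j)$ also generate $K$, which gives $t^{-1}x_it\in N$ as well, whence $N$ is normal, $N\cong K$ (the obvious map $G\to\pi$ restricts to a retraction of the canonical surjection $K\to N$), and $G\cong K\rtimes_\alpha\mathbb{Z}$. The count is then $(g+1)-(r+s)=1+def(K)-s$, independent of $g$, so the lemma holds exactly as stated and your closing caveat (``the clean statement presumes $s$ at least that large'') is unnecessary once this refinement is in place.
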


\begin{proof}
We may obtain a presentation for $\pi$ from one for $K$ 
by adjoining one more generator $t$ and one more relation $txt^{-1}=w_x$ 
for each $x$ in a set of generators of $K$.
Hence $\pi$ can be generated by $s+1$ elements and $def(\pi)\geq{def(K)+1-s}$.
\end{proof}

Is it generally true that if $K$ is an $s$-generated, finitely presentable group
then the direct product $K\times\mathbb{Z}$ has deficiency $def(K)+1-s$?

If $\pi=K\rtimes\mathbb{Z}$ and $R$ is a ring then
$\pi/K\cong\mathbb{Z}$ acts on $H_*(K;R)$ through conjugation in $\pi$.
The next lemma sharpens part (2) of Lemma 16.11 of \cite{Hi}.

\begin{lemma}
Let $\pi=K\rtimes\mathbb{Z}$, where $K$ is a $PD_3$-group, and let $R=\mathbb{Z}$ or a field.
If $def(\pi)=0$ and $H_1(K;R)$ is $R$-torsion free then $H_1(K;R)$ is cyclic as a $R\Lambda$-module.
\end{lemma}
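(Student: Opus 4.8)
The plan is to realize $H_1(K;R)$ as the Alexander module of the mapping-torus structure $\pi=K\rtimes\mathbb{Z}$ and to show that the conjugation operator $t_*$ is as non-degenerate as possible. First I record that, since $K$ is a $PD_3$-group it is $FP$, so $H_1(K;R)$ is finitely generated over $R$; the torsion-freeness hypothesis then makes it $R$-free, say $H_1(K;R)\cong R^b$ with $t$ acting through an automorphism $A\in GL(b,R)$. Consequently $H_1(K;R)$ has the square $R\Lambda$-presentation $R\Lambda^b\xrightarrow{tI-A}R\Lambda^b\to H_1(K;R)\to0$, and the module is cyclic precisely when $tI-A$ is equivalent over $R\Lambda$ to a matrix with a single non-unit invariant factor. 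Over a field this says that $A$ is non-derogatory, i.e.\ every eigenvalue has geometric multiplicity $1$; over $\mathbb{Z}$ it says that $A$ is conjugate to the companion matrix of its characteristic polynomial. Throughout I absorb the orientation character of the $PD_4$-group $\pi$ into the coefficient system, so that Lemma 1 applies.

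For $R$ a field I would prove that every eigenvalue of $A$ has geometric multiplicity at most $1$. For the eigenvalue $1$ this is immediate: the Wang sequence gives $\beta_1(\pi;R)=1+\dim_R\ker(t_*-1)$, while Lemma 1 gives $\beta_1(\pi;R)\leq2$, so $\dim_R\ker(t_*-1)\leq1$. For an eigenvalue $\lambda\neq1$ (working over the algebraic closure) I twist by the rank-one system $\rho_\lambda\colon t\mapsto\lambda$. The Wang sequence with these coefficients, together with Poincar\'e duality for the $PD_3$-group $K$ (which interchanges the $t_*$-eigenvalues $\lambda$ and $\lambda^{-1}$ on $H_1(K)$ and $H_2(K)$), yields $\beta_1(\pi;\rho_\lambda)=\mathrm{gm}(\lambda)$ and $\beta_2(\pi;\rho_\lambda)=\mathrm{gm}(\lambda)+\mathrm{gm}(\lambda^{-1})$, where $\mathrm{gm}$ denotes geometric multiplicity on $H_1(K;R)$. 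Now let $X$ be the $2$-complex of an optimal presentation, so $\chi(X;\rho_\lambda)=\chi(X)=1$; since $\beta_0(\pi;\rho_\lambda)=0$ this forces $\beta_2(X;\rho_\lambda)=1+\beta_1(\pi;\rho_\lambda)$. As $X\to K(\pi,1)$ is onto in $H_2$ with any local coefficients we have $\beta_2(\pi;\rho_\lambda)\leq\beta_2(X;\rho_\lambda)$, and substituting the two Wang computations gives $\mathrm{gm}(\lambda^{-1})\leq1$. Letting $\lambda$ range over all values $\neq1$ covers every eigenvalue, so $A$ is non-derogatory and $H_1(K;R)$ is cyclic.

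For $R=\mathbb{Z}$ I would first deduce cyclicity at every prime and then confront the global question. The field-case argument applies verbatim over each $\mathbb{F}_p$, since $\mathrm{def}(\pi)$, the $PD_3$ and $PD_4$ conditions and $\chi(\pi)=0$ are all insensitive to the coefficient field; and because $H_1(K;\mathbb{Z})$ is $\mathbb{Z}$-free the universal coefficient theorem gives $H_1(K;\mathbb{F}_p)\cong H_1(K;\mathbb{Z})/p$. Hence $A\bmod p$ is non-derogatory for every $p$, and $A$ is non-derogatory over $\mathbb{Q}$, which is to say $H_1(K;\mathbb{Z})$ is locally cyclic at every maximal ideal of $\Lambda=\mathbb{Z}[t,t^{-1}]$. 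The main obstacle is the passage from local to global cyclicity over this two-dimensional ring: a locally cyclic, $\mathbb{Z}$-torsion-free module is an invertible ideal over the order $\mathbb{Z}[A]$, and cyclicity is the vanishing of its class in the Picard group --- precisely the ideal-class phenomenon flagged in the introduction. I expect this to be the crux, and the hypothesis $\mathrm{def}(\pi)=0$ to be used exactly here: Fox calculus applied to a balanced presentation produces a $\Lambda$-presentation of $H_1(K;R)$ with one fewer generator than relator, and I would argue that such a presentation of a locally cyclic module forces the relevant ideal class to be trivial, so that $A$ is conjugate to a companion matrix and $H_1(K;R)$ is cyclic.
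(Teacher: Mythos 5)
Your field-coefficient argument is essentially sound, and is in effect an eigenvalue-by-eigenvalue decomposition of the duality the paper uses: the count $\beta_2(\pi;\rho_\lambda)\leq\beta_2(X;\rho_\lambda)=1+\beta_1(\pi;\rho_\lambda)$ together with the Wang sequence and duality for $K$ does bound every geometric multiplicity by $1$ (with one caveat you only wave at: when $K$ is nonorientable over $R$, Poincar\'e duality identifies $H_2(K;\overline{R})$ with the $w$-twisted first homology, not with $H_1(K;\overline{R})^*$, so you must also run the computation with coefficients $\rho_\lambda\otimes w$ to bound the untwisted multiplicities; this costs nothing but needs saying, and the appeal to Lemma 1 at $\lambda=1$ can be replaced by the same complex-$X$ count). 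The genuine gap is the case $R=\mathbb{Z}$, which is the case the paper actually exploits (Lemma 7 and the computations of \S3 rest on integral cyclicity and the ideals $E_1(tI-A)$). You correctly reduce it to a local-to-global question, but the principle you propose to close it --- that a square $\Lambda$-presentation of a locally cyclic module forces the relevant ideal class to be trivial --- is false as stated: since $H_1(K;\mathbb{Z})$ is $\mathbb{Z}$-free, it admits the square presentation $tI_b-A$ with respect to \emph{any} $\mathbb{Z}$-basis, whatever the class of the corresponding ideal in the order $\mathbb{Z}[A]$. So the existence of a balanced presentation matrix carries no information by itself, Fox calculus cannot detect the class, and your argument terminates exactly at the ideal-class obstruction rather than past it.

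The missing idea is that the paper never treats $H_1(K;\mathbb{Z})$ as an ideal at all; it dualizes one degree up, over the ring $R\Lambda$ itself, where the class-group problem disappears. From a deficiency-$0$ presentation one takes the $2$-complex $X$ and observes that $H_0(X;R\Lambda)$ and $H_1(X;R\Lambda)$ are $R\Lambda$-torsion while $H_2(X;R\Lambda)$ is a submodule of a free module; the Euler characteristic count then makes $H_2(X;R\Lambda)$ torsion free of rank $1$, and --- this is the crux --- such a module is free, $H_2(X;R\Lambda)\cong R\Lambda$, because $R\Lambda$ is a UFD: the divisor class group of $R\Lambda$ is trivial even though the ideal classes of the orders $\mathbb{Z}[A]$ are not. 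Hopf's theorem then makes $H_2(\pi;R\Lambda)$ cyclic, and Poincar\'e duality for the $PD_4$-group $\pi$ with $R\Lambda$ coefficients, via the universal coefficient spectral sequence, gives $H_1(K;R)=H_1(\pi;R\Lambda)\cong\overline{Ext^1_{R\Lambda}(H_2(\pi;R\Lambda),R\Lambda)}$, which is cyclic. Note also where the hypothesis that $H_1(K;R)$ is $R$-torsion free really enters: not (as in your setup) merely to produce the matrix $A$, but to give $H_1(K;R)$ a length-one free resolution over $R\Lambda$, so that $Ext^2_{R\Lambda}(H_1(K;R),R\Lambda)=0$ and the spectral sequence collapses to the displayed isomorphism. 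Without that $Ext$ vanishing the duality argument --- and hence the integral statement --- does not go through.
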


\begin{proof} 
If $X$ is the finite 2-complex determined by a deficiency 0 presentation for 
$\pi$ then $H_0(X;R\Lambda)=R\Lambda/(t-1)$ and $H_1(X;R\Lambda)$ are $R\Lambda$-torsion modules, 
and $H_2(X;R\Lambda)$
is a submodule of a finitely generated free $R\Lambda$-module.
Hence $H_2(X;R\Lambda)$ is torsion free of rank 1, and so
$H_2(X;R\Lambda)\cong{R\Lambda}$, since $R\Lambda$ is an UFD.
Therefore $H_2(\pi;R\Lambda)$ is cyclic as an $R\Lambda$-module, since it is a quotient of 
$H_2(X;R\Lambda)$,
by Hopf's Theorem.

If $H_1(K;R)$ is $R$-torsion free then it has a short free resolution as a $R\Lambda$-module,
and so $Ext^2_{R\Lambda}(H_1(K;R),R\Lambda)=0$.
Since $\pi$ is a $PD_4$-group, 
\[
H_1(K;R)=H_1(\pi;R\Lambda)\cong
\overline{H^3(\pi;R\Lambda)}
\cong\overline{Ext^1_{R\Lambda}(H_2(\pi;R\Lambda),R\Lambda)},
\]
by Poincar\'e duality and the Universal Coefficient spectral sequence. 
Since $H_2(\pi;R\Lambda)$ is cyclic as a $R\Lambda$-module, so is $H_1(K;R)$.
\end{proof}

In particular, if $K/K'$ is torsion free and $A$ is the matrix of the action of a generator $t$ 
of $\pi/K$ on $K/K'$ with respect to some basis 
then the ``elementary" ideal $E_1(tI-A)$ in $\Lambda$
generated by the maximal proper minors of $\det(tI-A)$ must be the full ring.
(There are subtler conditions that may obstruct cyclicity when this condition holds.
See Chapter 7 of \cite{Hi1}.)

\begin{lemma}
Let $K$ be torsion-free and virtually poly-$Z$ of Hirsch length $3$.
Then $K$ is $3$-generated and $def(K)=0$.
\end{lemma}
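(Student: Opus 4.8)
The plan is to trap $def(K)$ between $0$ and $0$ and to exhibit a generating set of size three. Since $K$ is torsion-free and virtually poly-$Z$ of Hirsch length $3$ it is a $PD_3$-group, so $c.d.\,K=3>2$. It is also infinite and virtually polycyclic, hence amenable, so $\beta_1^{(2)}(K)=0$. The amenability estimate above (the lemma deduced from Theorem 2.5 of \cite{Hi}) therefore gives $def(K)\leq0$, and it remains only to show that $K$ is $3$-generated and that $def(K)\geq0$.

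For the lower bound I would argue according to $\beta=\beta_1(K;\mathbb{Q})$, the easy case being $\beta\geq1$. Then there is an epimorphism $p\colon K\to\mathbb{Z}$. As $K$ is polycyclic-by-finite it satisfies the maximal condition on subgroups, so $Q=\ker p$ is finitely generated; it is torsion-free, virtually poly-$Z$ and of Hirsch length $2$, hence isomorphic to $\mathbb{Z}^2$ or to $\pi_1(Kb)$, the only such groups. In either case $Q$ is $2$-generated with $def(Q)=1$. Since $\mathbb{Z}$ is free, $p$ splits and $K\cong Q\rtimes\mathbb{Z}$, so the semidirect-product estimate above shows that $K$ is $3$-generated and that $def(K)\geq1+def(Q)-2=0$. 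With the upper bound this gives $def(K)=0$.

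The case $\beta=0$ is where I expect the real difficulty, since here $K$ has no epimorphism onto $\mathbb{Z}$ and the clean fibring $K\cong Q\rtimes\mathbb{Z}$ is unavailable. These $K$ are the rational homology sphere groups from the three geometries: the flat Hantzsche--Wendt group $G_6$; the Nil groups that are central extensions by $\mathbb{Z}$ of a Euclidean orbifold group (those fibred over $S^2(2,3,6)$, $S^2(2,4,4)$, $S^2(3,3,3)$ or $S^2(2,2,2,2)$ with finite first homology); and the Sol groups $\pi_1(Kb)*_{\mathbb{Z}^2}\pi_1(Kb)$ arising as Anosov torus semi-bundles. For each family I would write down a balanced presentation on three generators by hand: for $G_6$ from its standard two-generator presentation, for the semi-bundles from the amalgam after eliminating the generators identified along the common $\mathbb{Z}^2$, and for the Nil families from the central extension, using the specific arithmetic of the Seifert invariants to cancel the would-be extra relators. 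The upper bound $def(K)\leq0$ then forces equality and confirms three-generation.

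The main obstacle is precisely this last case. One cannot reduce it to the fibred case by passing to a finite-index subgroup, because the finite-index estimate above bounds the deficiency of a subgroup from below in terms of $def(K)$, i.e.\ in the wrong direction, and neither three-generation nor deficiency passes cleanly to or from finite covers. The delicate point is therefore the explicit elimination of generators and relators in the central-extension and amalgam presentations, where the balance between generators and relators survives only because the quotient orbifold is Euclidean rather than merely virtually abelian.
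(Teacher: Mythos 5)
Your upper bound ($def(K)\leq0$ via Lemma 2) and your $\beta\geq1$ argument are both correct, and the latter is a clean route: the kernel $Q$ of an epimorphism to $\mathbb{Z}$ is finitely generated by the maximal condition, has Hirsch length 2, hence is $\mathbb{Z}^2$ or $\pi_1(Kb)$, and Lemma 4 then gives 3-generation and $def(K)\geq0$ at once. But the proof has a genuine gap exactly where you place it: for $\beta=0$ you only promise to ``write down a balanced presentation on three generators by hand'' for $G_6$, the $\mathbb{N}il^3$-groups over Euclidean bases with $\beta_1^{orb}=0$, and the $\mathbb{S}ol^3$ semi-bundles, and this is never carried out. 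As written, the lower bound $def(K)\geq0$ is unproved in precisely the cases you identify as the main obstacle, and the plan commits you to something strictly harder than the lemma requires, namely a deficiency-0 presentation on exactly three generators. (Your enumeration is also slightly off: $\mathbb{N}il^3$-manifolds Seifert fibred over $P(2,2)$ likewise have $\beta=0$.)

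The missing idea, and the paper's route, is to decouple the two claims. Since $K$ is the fundamental group of a closed 3-manifold (with geometry $\mathbb{E}^3$, $\mathbb{N}il^3$ or $\mathbb{S}ol^3$), and every closed 3-manifold has a handle decomposition with equal numbers of 1- and 2-handles, $K$ automatically has a presentation of deficiency 0 --- with no constraint on, or need to control, the number of generators --- uniformly in all cases, including $\beta=0$; optimality then follows from Lemma 2 as in your first paragraph. No arithmetic of Seifert invariants enters the deficiency claim at all. Three-generation is then a separate, soft statement: inspection of the ten flat groups; for $\mathbb{N}il^3$-groups, $\pi^{orb}(B)$ is 2-generated for every admissible base except $S(2,2,2,2)$, so lifting two generators and adjoining the central fibre generator gives three, and the exceptional base needs only a short manipulation (using the oddness of the invariants to write $h=a^2$ and eliminate $d$); for the semi-bundle $\pi_1(Kb)*_{\mathbb{Z}^2}\pi_1(Kb)$, the two generators of one Klein bottle factor together with one extra generator of the other suffice, since the amalgamated $\mathbb{Z}^2$ lies in the first factor. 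Your observation that one cannot transfer deficiency or generation through finite-index subgroups is correct but moot: the paper never needs to pass to covers, because the handle-decomposition argument applies to the $\beta=0$ groups directly.
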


\begin{proof}
Such a group  is the fundamental group of a 3-manifold $M$ with one of the geometries
$\mathbb{E}^3$, $\mathbb{N}il^3$ or $\mathbb{S}ol^3$.
Inspection of the ten flat 3-manifold groups shows that six are 2-generated,
while $\mathbb{Z}^3$, $G_2$, $B_1$ and $B_3$ each require three generators.
If $M$ is a $\mathbb{N}il^3$-manifold then it is Seifert fibred,
with base $B$ one of the seven flat 2-orbifolds with no reflector curves
(namely, $T$, $Kb$, $P(2,2)$, $S(2,2,2,2)$, $S(2,4,4)$, $S(2,3,6)$ and $S(3,3,3)$.
The only case that requires any thought is when $B=S(2,2,2,2)$, as otherwise
$\pi^{orb}(B)$ is 2-generated.
In the exceptional case $K$ has a presentation
\[
\langle{a,b,c,d,h}\mid{a^2=h^\alpha,~b^2=h^\beta,~c^2=h^\gamma,~d^2=h^\delta,~abcd=h^e,~h~central}\rangle,
\]
where $\alpha,\dots,\delta$ are odd, and $e+\frac12(\alpha+\beta+\gamma+\delta)\not=0$.
On replacing $a$ by $ah^i$, where $2i=1-\alpha$, we may write $h=a^2$ and then $d=abch^m=abca^{2m}$ 
for $m=\delta-e$, and so $K$ is generated by the images of $\{a,b,c\}$.

If $M$ is a $\mathbb{S}ol^3$-manifold then either it is a mapping torus and
$K\cong\mathbb{Z}^2\rtimes\mathbb{Z}$
or it is the union of two twisted $I$-bundles over  $Kb$ and
 $K\cong\pi_1(Kb)*_{\mathbb{Z}^2}\pi_1(Kb)$.
 In each case $K$ is 3-generated.
 Finally, every 3-manifold has a handle decomposition with equal numbers of 1- and
 2-handles, and so $K$ has a presentation of deficiency 0.
 This is best possible by Lemma 2, since $\beta_1^{(2)}(K)=0$ and $c.d.K=3$.
\end{proof}

An argument similar to one used in the lemma shows that the groups of $\mathbb{N}il^3$-manifolds 
with Seifert base $P(2,2)$, $S(2,4,4)$, $S(2,3,6)$ or $S(3,3,3)$ are 2-generated. 
Note also that Seifert fibred 3-manifolds with base $S(2,2,2,2)$ are also unions 
of two twisted $I$-bundles over $Kb$.
If the base is $T$ or $Kb$ then the group is 3-generated, 
with one exception in each case, when the Euler invariant of the fibration is $\pm1$.

\section{deficiencies of 4-dimensional infrasolvmanifold groups}

Let $\pi$ be a torsion-free virtually poly-$Z$ group of Hirsch length 4.
Then $\beta=\beta_1(\pi;\mathbb{Q})\leq4$.
If $\beta>0$ then $\pi\cong{K}\rtimes\mathbb{Z}$, 
where $K$ is torsion-free and virtually poly-$Z$ of Hirsch length 3.
(If $\beta>1$ the semidirect product structure is not unique.)
Hence $\pi$ is 4-generated and $def(K\rtimes\mathbb{Z})\geq-2$,
by Lemmas 4 and 6.
In particular, this is so if $\pi$ is orientable, since $\chi(\pi)=0$ 
and so $\beta>0$, by Lemma 3.14 of \cite{Hi}.

Since $c.d.\pi=4$ and the $L^2$-Betti numbers $\beta_i^{(2)}(\pi)$ are 0 \cite{CG}, $def(\pi)\leq0$, by Lemma 2.

\subsection{} $\beta=4$. 
There is only one such group, namely $\pi\cong\mathbb{Z}^4$, 
which has a presentation with 4 generators and 6 relations.
Hence $def(\mathbb{Z}^4)=-2$, by Lemma 1.

\subsection{} $\beta=3$.
In this case $def(\pi)\leq-1$, by Lemma 1, and
$\pi$ has a normal subgroup $C\cong\mathbb{Z}$ such that 
$\pi/C\cong\mathbb{Z}^3$.

If $C$ is central then $\pi\cong\Gamma_q\times\mathbb{Z}$ for some $q\geq1$.
If $q>1$ then $def(\pi)=-2$, by Lemmas 1 and 4,
but if $q=1$ then $\Gamma\times\mathbb{Z}$ has the presentation
\[
\langle{t,u,v}|~tu=ut,~tv=vt,~u[u,v]=[u,v]u,~v[u,v]=[u,v]v\rangle,
\]
since the first two relations imply that $t[u,v]=[u,v]t$.
Hence $\Gamma\times\mathbb{Z}$ has deficiency $-1$.
All torsion free nilpotent groups of Hirsch length 4 and class 2 
are commensurable,
and $\Gamma\times\mathbb{Z}$ is the simplest representative.

If $C$ is not central then $\pi$ has a presentation
\[
\langle{t,u,v,x}|~txt^{-1}=x^{-1},~u,v\leftrightharpoons{x},~[t,u]=x^a,~[t,v]=x^b,~[u,v]=x^q\rangle.
\]
Since $tux^it^{-1}=ux^i.x^{a-2i}$ and  $tvx^jt^{-1}=vx^j.x^{a-2j}$,
we may assume that $0\leq{a,b}\leq1$.
Since $tuvt^{-1}=uvx^{a+b}$ we may further assume that $b=0$
(after swapping $u$ and $v$ and replacing $v$ by $uv$, if necessary).
Since $t[u,v]t^{-1}=[u,v]$ we must have $q=0$.

If $a=0$ then $\pi\cong{B_1}\times\mathbb{Z}$ and $\beta_1(\pi;\mathbb{F}_2)=4$.
Hence $def(B_1\times\mathbb{Z})=-2$, by Lemmas 1 and 4.

If $a=1$ then $\pi\cong{B_2}\times\mathbb{Z}$,
which has a presentation
\[
\langle{t,u,v}|~t^2u=ut^2,~tv=vt,~u.tut^{-1}=tut^{-1}.u,~uv=vu\rangle
\]
with deficiency $-1$.
Let $K$ be the subgroup generated by $\{u,tut^{-1},v\}$.
Then $K\cong\mathbb{Z}^3$ and is normal in $\pi$,
and Lemma 4 applies to show that $def(B_2\times\mathbb{Z})=-1$.

\subsection{} $\beta=2$.
In this case $\pi$ has a normal subgroup $N\cong\mathbb{Z}^2$ 
or $\pi_1(Kb)={\mathbb{Z}\rtimes_{-1}\mathbb{Z}}$ and such that 
$\pi/N\cong\mathbb{Z}^2$.
Thus these are the groups of bundles over the torus with fibre the torus or the Klein bottle.
If $N>\pi'$ then $def(\pi)\leq-1$, by Lemma 1.

If $N\cong\mathbb{Z}^2$ then conjugation in $\pi$ induces a  homomorphism 
from $\pi/N$ to $Aut(N)\cong{GL(2,\mathbb{Z})}$.
The image cannot be trivial, for otherwise $\beta>2$.
Hence it is finite cyclic (of order 2, 3, 4 or 6),  $\mathbb{Z}$, $(Z/2Z)^2$ or ${\mathbb{Z}\oplus{Z/2Z}}$.
In each of the latter two cases one summand is generated by $-I$.
(These observations follow easily from the fact that $-I$ generates the centre of ${SL(2,\mathbb{Z})}$,
with quotient $PSL(2,\mathbb{Z})\cong{Z/2Z*Z/3Z}$.)
Hence $\pi\cong{K}\rtimes_\theta\mathbb{Z}$, where $K$ is virtually $\mathbb{Z}^3$,
and has cyclic holonomy.

If  $\pi/N$ acts on $N$ through a cyclic group then $K\cong\mathbb{Z}^3$ 
and so $\pi$ has a presentation
\[
\langle{t,u,x,y}\mid [t,u]=x^m,~txt^{-1}=x^ay^c,~tyt^{-1}=x^by^d,~ux=xu,~uy=yu,
\]
\[
xy=yx\rangle,
\]
where $A=\left(\begin{smallmatrix}
a& b\\
c& d
\end{smallmatrix}\right)$ generates the action of $\pi/N$.
The $\Lambda$-module $N$ is annihilated by $\Delta_A(t)=\det(tI_2-A)$,
while $K/N\cong\Lambda/(t-1)$.
If $A^2\not=I$ this polynomial is irreducible, 
and $N$ is isomorphic to an ideal in the integral domain
$\Lambda/(\Delta_A(t))$.
The extensions of $K/N$ by $N$ (as $\Lambda$-modules) are classified by 
$Ext_\Lambda(\Lambda/(t-1),N)\cong
\Lambda/(t-1,\Delta_A(t))=\mathbb{Z}/(\Delta_A(1))$.
We also have $\pi\cong{L}\rtimes\mathbb{Z}$,
where $L=N\rtimes_A\mathbb{Z}$.

\begin{lemma}
If $\pi\cong\mathbb{Z}^3\rtimes_\theta\mathbb{Z}$,
where $\theta=\left(\begin{smallmatrix}
1&0\\
\mu&A
\end{smallmatrix}\right)\in{GL(3,\mathbb{Z})}$ and
$\mu=\left(\begin{smallmatrix}
m\\
0
\end{smallmatrix}\right)$
then $def(\pi)=0$ if and only if
$A$ is conjugate to $\left(\begin{smallmatrix}
0&\varepsilon\\
1&k
\end{smallmatrix}\right)$ for some $\varepsilon=\pm1$
and $k\in\mathbb{Z}$ and $\mu$ represents a generator of 
$\mathrm{Cok}(I_2-A)$.

If $(mc,m(d-1),(a-1)((d-1)-bc )>1$ then $def(\pi)=-1$.
\end{lemma}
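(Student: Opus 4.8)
The plan is to recast both assertions as statements about the $\Lambda$-module structure carried by $\pi^{ab}$ and by the abelian normal subgroups, to settle the upper bounds homologically, and then to realise the lower bounds by explicit presentations. First I would record the elementary homology. Since $K=\mathbb{Z}^3$ is abelian and $\pi/K$ is infinite cyclic, $\pi'=\mathrm{im}(\theta-1)\subseteq N$, so $\pi^{ab}\cong\mathbb{Z}^2\oplus\mathrm{Cok}(M)$, where $M=\left(\begin{smallmatrix}m&a-1&b\\0&c&d-1\end{smallmatrix}\right)$ and the three displayed quantities $mc$, $m(d-1)$, $(a-1)(d-1)-bc$ are precisely its $2\times2$ minors; thus $|\mathrm{Cok}(M)|$ is their gcd. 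As the last two columns of $M$ span $\mathrm{im}(I_2-A)$ and the first is $\mu$, one has $\mathrm{Cok}(M)\cong\mathrm{Cok}(I_2-A)/\langle\mu\rangle$. Hence the hypothesis of the first statement, that $\mu$ represent a generator of $\mathrm{Cok}(I_2-A)$, is \emph{exactly} the condition $\mathrm{Cok}(M)=0$, i.e. that the gcd equals $1$ and $N=\pi'$; and the hypothesis of the second statement is exactly $\mathrm{Cok}(M)\ne0$, i.e. $N>\pi'$.

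For the upper bounds, $def(\pi)\le0$ is already available from Lemma 2. To sharpen it I would feed a finite field into Lemma 1: if $\mathrm{Cok}(M)\ne0$, pick a prime $p$ dividing its order, so that $\beta_1(\pi;\mathbb{F}_p)=2+\dim_{\mathbb{F}_p}(\mathrm{Cok}(M)\otimes\mathbb{F}_p)\ge3$ and therefore $def(\pi)\le2-\beta_1(\pi;\mathbb{F}_p)\le-1$. The only technical point is orientability over $\mathbb{F}_p$: it is automatic when $\det A=1$ and automatic when $p=2$, and in the residual case ($\det A=-1$, $p$ odd) I would pass to the orientation double cover or invoke the twisted form of Poincar\'e duality. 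This yields the ``$\le$'' halves of both statements, and in particular shows that $def(\pi)=0$ forces $\mathrm{Cok}(M)=0$, i.e. the $\mu$-condition.

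For the necessity of the first statement I would apply Lemma 5 to the fibration $\pi=\mathbb{Z}^3\rtimes_\theta\mathbb{Z}$ with $R=\mathbb{Z}$: as $H_1(\mathbb{Z}^3;\mathbb{Z})=\mathbb{Z}^3$ is torsion free, $def(\pi)=0$ makes it cyclic as a $\Lambda$-module with $t$ acting by $\theta$, so $\theta$ is conjugate over $\mathbb{Z}$ to the companion matrix of its characteristic polynomial $(t-1)\Delta_A(t)$. Now $N$ is the kernel of the $\Lambda$-map $\mathbb{Z}^3\to\Lambda/(t-1)$, and in the cyclic module $\Lambda/((t-1)\Delta_A)$ that kernel is $(t-1)\Lambda/((t-1)\Delta_A)\cong\Lambda/(\Delta_A)$, which is cyclic; hence $A$ is itself conjugate to a companion matrix $\left(\begin{smallmatrix}0&\varepsilon\\1&k\end{smallmatrix}\right)$ (here $\Delta_A(1)=\det(I_2-A)\ne0$ since $\beta=2$, giving the coprimality that pins down this submodule). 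Finally the class of the extension $0\to N\to\mathbb{Z}^3\to\Lambda/(t-1)\to0$ lives in $\mathrm{Ext}^1_\Lambda(\Lambda/(t-1),N)=N/(I_2-A)N=\mathrm{Cok}(I_2-A)$ and equals $[\mu]$, which must be a generator for the total module to be cyclic. This is precisely the stated condition.

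The real work, and what I expect to be the main obstacle, is the lower bounds. Putting $A$ in companion form makes $N$ a cyclic $\Lambda$-module, so $L=N\rtimes_A\mathbb{Z}$ is $2$-generated with $def(L)=0$ (Lemma 6); writing $\pi=L\rtimes\langle u\rangle$ and applying Lemma 4 already gives $def(\pi)\ge-1$. To upgrade this to $def(\pi)=0$ in the first statement I would exploit the reformulated $\mu$-condition: replacing $u$ by $u\cdot n$ for a suitable $n\in N$ changes $[t,u]=x^m$ into $[t,un]=\mu+(t-1)n$, whose class in $\mathrm{Cok}(I_2-A)$ is the generator $[\mu]$, and one tries to choose $n$ so that $[t,un]$ is a $\Lambda$-generator of $N$, making $\pi$ two-generated and producing a balanced presentation. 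I expect two genuine difficulties here. In the second statement, when $N$ fails to be a cyclic $\Lambda$-module (a nontrivial ideal class in the relevant order), $L$ is no longer $2$-generated, and since $u$ acts trivially on $N$ every fibration of $\pi$ over the circle has fibre $N\rtimes_{A^q}\mathbb{Z}$, none of which is $2$-generated, so Lemma 4 cannot deliver $def(\pi)\ge-1$; one must instead build a $3$-generator, $4$-relator presentation directly and use $\mathrm{Cok}(M)\ne0$ to force a redundancy. And even in the companion case of the first statement, the step ``choose $n$ so that $[t,un]$ generates $N$'' amounts to lifting a generator of $\mathrm{Cok}(I_2-A)$ to a unit of the order $\Lambda/(\Delta_A)$, which is exactly the arithmetic of ideal classes and units that the introduction flags as delicate. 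These explicit presentation constructions, rather than the homological estimates, are the crux of the proof.
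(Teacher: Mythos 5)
Your necessity argument and your upper bounds are essentially the paper's own: Lemma 5 with $R=\mathbb{Z}$ gives $K=\mathbb{Z}^3$ cyclic as a $\Lambda$-module, hence $N=(t-1)K$ cyclic and $A$ conjugate to a companion matrix, with the class $[\mu]\in Ext^1_\Lambda(\Lambda/(t-1),N)\cong\mathrm{Cok}(I_2-A)$ forced to be a generator; and the last assertion is proved in the paper exactly as you propose, by noting that the three displayed quantities are the $2\times2$ minors of $I_3-\theta$ (your $\mathrm{Cok}(M)$ bookkeeping), choosing a common prime $p$, and applying Lemma 1 over $\mathbb{F}_p$. One caveat: your orientability worry is fair (the paper applies Lemma 1 silently), but your repairs are not immediate -- passing to the orientable double cover $H$ only yields $def(G)\leq\frac12(def(H)+1)\leq0$ via Lemma 3, and twisted duality replaces $\beta_3$ by the $w$-twisted first Betti number, so $\beta_1(\pi;\mathbb{F}_p)\geq3$ alone does not conclude; this step needs more care than you allow.

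The genuine gaps are the two points you flag and leave open, and they should be resolved differently than you suggest. For sufficiency, the paper's ``it follows easily that $def(\pi)=0$'' does not go through your $u$-adjustment: it reads the two hypotheses as making the \emph{whole} module $K$ cyclic, so that $\theta$ is conjugate to the companion matrix of $(t-1)\Delta_A(t)$, and then $\pi$ has the balanced presentation $\langle t,z\mid t^3zt^{-3}=w,\ [z,tzt^{-1}]=1\rangle$, the relation $[z,t^2zt^{-2}]=1$ being redundant by the same conjugation trick used for $\Upsilon$ and the ``delicate'' example: conjugate twice by $t$ and use that the exponent of $z$ in $w$ is $\pm1$ because $\det\theta=\pm1$. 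You never produce any balanced presentation, so your proposal does not actually prove the ``if'' direction. That said, your unit-lifting obstruction is acute, and it hits precisely the paper's glossed claim ``$N$ cyclic and $[\mu]$ a generator $\Rightarrow K$ cyclic'': cyclicity of $K$ requires a $\Lambda$-generator of $N$, i.e.\ a unit of $R=\Lambda/(\Delta_A)$, in the coset $\mu+(t-1)N$, and since the units $\pm t^j$ reduce to $\pm1$ in $R/(t-1)\cong\mathbb{Z}/\Delta_A(1)$, a generator class need not lift (try $\Delta_A=t^2-5t-1$, $m=2$: the class $2$ generates $\mathbb{Z}/5$ but is not $\pm1$). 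So you correctly located the delicate point, but neither closed it nor supplied the presentation mechanism. Second, your plan to prove the equality $def(\pi)=-1$ in the final assertion by building a $3$-generator, $4$-relation presentation is a dead end: the paper's proof establishes only $def(\pi)\leq-1$ there, and the literal equality fails -- for $A\equiv I_2$ mod $2$ with $m$ even the paper itself notes $\beta_1(\pi;\mathbb{F}_2)=4$ and hence $def(\pi)=-2$ -- so that clause must be read as an upper bound, and no lower-bound construction from $\mathrm{Cok}(M)\neq0$ alone can succeed. Relatedly, $\mathrm{Cok}(M)\neq0$ is not the same as ``$N$ non-cyclic'': the ideal-class phenomenon you invoke occurs in the complementary region $\mathrm{Cok}(M)=0$ with $N$ non-cyclic (the paper's example with $E_1(tI_3-\theta)=(t+1,3)$), which is covered by neither clause of the lemma and is handled by Lemma 5 directly.
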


\begin{proof}
If  $def(\pi)=0$ then $K$ is cyclic as a $\Lambda$-module, 
by Lemma 5, and then so is $N=(t-1)K$.
Hence $A$ is conjugate to such a matrix, and 
$Ext_\Lambda(\Lambda/(t-1),N)\cong\mathrm{Cok}(I_2-A)$ as an abelian group.
Conversely, if $A$ is conjugate to $\left(\begin{smallmatrix}
0&\varepsilon\\
1&k
\end{smallmatrix}\right)$ for some $\varepsilon=\pm1$
then $N$ is cyclic, and if also $\mu$ represents a generator of 
$\mathrm{Cok}(I_2-A)$ then $K$ are cyclic.
It follows easily that $def(\pi)=0$.

If the $2\times2$ minors $mc$, $m(d-1)$, and $(a-1)(d-1)-bc)$ of $I_3-\theta$ 
have a common prime divisor  $p$ then 
$\beta_1(\pi;\mathbb{F}_p)\geq3$, and so $def(\pi)\leq-1$, by Lemma 1.
\end{proof}

The $2\times2$ minors are not all 0, since $\beta=2$.

If $A$ has order $n=3$, 4 or 6 it is conjugate 
to such a matrix, and $L\cong{G_3,G_4}$ or $G_5$.
These groups are 2-generated, and so $def(\pi)\geq-1$, by Lemma 4.
If $n=3 $ or 4 and if $m$ is not divisible by 3 or 2, respectively, 
or if $n=6$ then $def(\pi)=0$, by Lemma 5.
In particular, $def(G_5\rtimes\mathbb{Z})=0$.
Otherwise $def(\pi)=-1$.

Similarly, if $A$ is conjugate to $\left(\begin{smallmatrix}
0&1\\
1&0
\end{smallmatrix}\right)$ then $L\cong{B_2}$,
which is 2-generated, and so $def(\pi)\geq-1$,
and $def(\pi)=0$ if and only if $m=\pm1$.

If $A$ is conjugate to $-I$ or
$\left(\begin{smallmatrix}
1&0\\
0&-1
\end{smallmatrix}\right)$ then $\beta_1(\pi;\mathbb{F}_2)\geq3$
and so $def(\pi)\leq-1$.
Since $A\equiv{I_3}$ {\it mod} (2), 
$\beta_1(\pi;\mathbb{F}_2)=4$ if and only if $m$ is even.
In that case $def(\pi)=-2$.

When  $A=\left(\begin{smallmatrix}
1&1\\
0&1
\end{smallmatrix}\right)$ we obtain the group 
$\Upsilon$ with presentation
\[
\langle{t,u,x,y,}\mid{[t,u]=y,~tx=xt,~tyt^{-1}=xy,~uy=yu}\rangle.
\]
Note that conjugating the final relation by $t$ leads to $x.{uy}=uy.x$;
conjugating this by $t^{-1}$ gives $ux=xu$, and so $u,x$ and $y$ all commute.
This is the simplest member of the commensurability class 
of nilpotent groups of Hirsch length 4 and class 3.
(See Theorem 1.5 of \cite{Hi}.) 
Solving for $y=[t,u]$ and then $x=[t,[t,u]]$, 
we obtain the shorter presentation
\[
\langle{t,u}\mid{[t,[t,[t,u]]]=[u,[t,u]]=1}\rangle.
\]
These presentations have optimal deficiency, by Lemma 1, and so $def(\Upsilon)=0$.

Since $\beta_1(G_4\times\mathbb{Z};\mathbb{F}_2)=3$ and 
$G_4\times\mathbb{Z}$ has the presentation 
\[
\langle{t,u,x}|~tu=ut,~tx=xt,~u^2x=xu^2,~xuxu^{-1}=uxu^{-1}x\rangle,
\]
$def(G_4\times\mathbb{Z})=-1$, by Lemma 1.

A more delicate example is given by the group $\pi$ with presentation
\[
\langle{t,x,y,z}\mid {txt^{-1}=xy,}~tyt^{-1}=y^5z^3,~tzt^{-1}=y^{18}z^{11},~x\leftrightharpoons{y,z}\rangle.
\]
Conjugating the relations $x\leftrightharpoons{y,z}$ with $t$ shows that 
$xy\leftrightharpoons{y^5z^3,y^{18}z^{11}}$, and hence that $yz=zy$.
Hence $\langle{x,y,z}\rangle\cong\mathbb{Z}^3$, and so
 $\pi\cong\mathbb{Z}^3\rtimes_\theta\mathbb{Z}$, where
\[
\theta=
\left(
\begin{matrix}
1& 0 & 0\\
1 & 5 & 18\\
 0& 3 & 11
\end{matrix}
\right)\in{SL(3,\mathbb{Z})}.
\]
In this case $\pi/\pi'\cong\mathbb{Z}^2$.
However, 
$\mathbb{Z}^3$ is not cyclic as a $\Lambda$-module,
since $E_1(tI_3-\theta)=(t+1,3)$, 
and so  $def(\pi)=-1$, by Lemma 5.
 
Since $\beta_1(G_2\times\mathbb{Z};\mathbb{F}_2)=4$,  $def(G_2\times\mathbb{Z})=-2$, by Lemma 1.

If the action does not factor through $\mathbb{Z}$ then 
the image has a summand generated by $-I$,
and so we may assume that $K=G_2$.
Hence $\pi$ has a presentation
\[
\langle{t,u,x,y}\mid [t,u]=x^m,~txt^{-1}=x^ay^c,~tyt^{-1}=x^by^d,~uxu^{-1}=x^{-1},
\]
\[uyu^{-1}=y^{-1},~xy=yx\rangle.
\]
If $A=\left(\begin{smallmatrix}
a& b\\
c& d
\end{smallmatrix}\right)$ 
has order 2 we may arrange that $A$ is either  
$\left(\begin{smallmatrix}
1&0\\
0&-1
\end{smallmatrix}\right)$ 
or $\left(\begin{smallmatrix}
0&1\\
1&0
\end{smallmatrix}\right)$, after changing the basis of $N$, if necessary.
The relation $ [t,u]=x^m$ then becomes $[t,u]=x^ry^s$, for some $r,s$ with $(r,s)=m$.
If $A=\left(\begin{smallmatrix}
1&0\\
0&-1
\end{smallmatrix}\right)$ then $\beta_1(\pi;\mathbb{F}_2)\geq3$,
so $def(\pi)\leq-1$,
and $\beta_1(\pi;\mathbb{F}_2)=4$ if and only if $m$ is even.
In particular, $\pi\cong\pi_1(Kb)\times\pi_1(Kb)$ when $m=0$,
and then $def(\pi)=-2$.

If $A=\left(\begin{smallmatrix}
0&1\\
1&0
\end{smallmatrix}\right)$ then $L=N\rtimes_A\mathbb{Z}\cong{B_2}$, 
and so $def(\pi)\geq-1$, by Lemma 4.

Taking $m=0$ and $A=\left(\begin{smallmatrix}
1&2\\
2&5
\end{smallmatrix}\right)$ gives an orientable example with $\beta(\pi;\mathbb{F}_2)=4$, and hence with $def(\pi)=-2$.

The options are more limited when $N=\pi_1(Kb)$,
but we may now determine the deficiency completely.
In this case $\pi$ has a presentation
\[
\langle{t,u,x,y}\mid{[t,u]=x^my^n,
~txt^{-1}=x^\varepsilon{y^k},~uxu^{-1}=xy^\ell,~tyt^{-1}=y^\delta,}
\]
\[
uyu^{-1}=y^\eta,~xyx^{-1}=y^{-1}\rangle,
\]
where $\varepsilon,\eta$ and $\delta$ are each $\pm1$.
Since $[t,u]$ must commute with $y$, the exponent $m$ is even, and so $\beta_1(\pi;\mathbb{F}_2)\geq3$.
Hence $def(\pi)\leq-1$.
On replacing $t$ and $u$ by $tg$, $uh$ for suitable $g,h\in\langle{x,y}\rangle$ we may assume that
$\delta=\eta=1$ and that $0\leq{k,\ell}\leq1$.

If $k$ or $\ell$ is 1 we may solve for $y$, and get a presentation of deficiency $-1$.
For then the subgroup generated by $\{u,x\}$ is normal in $\pi$,
and so $ty=yt$ is a consequence of the first two relations.
Hence $def(\pi)=-1$.

If $k=\ell=0$ then $x$ also commutes with $[t,u]$, so $n=0$.
Hence $\beta_1(\pi;\mathbb{F}_2)=4$, and so $def(\pi)=-2$.

\subsection{} $\beta=1$. 
In this case there is an unique normal subgroup $K$ such that $\pi\cong{K}\rtimes\mathbb{Z}$.
If $K=\pi'$ then $\pi$ is a 2-knot group,
and if $K=\pi'\cong\mathbb{Z}^3$ then $\pi$ has deficiency $\geq-1$ \cite{AR}. 
It has deficiency 0 if and only if $\pi'$ is cyclic as a $\Lambda$-module,
by Theorem 16.12 of \cite{Hi}. 
The presentations 
\[
\langle a,b\mid ba^2=a^3b^2,~b^2a=a^2b^3\rangle
\]
and
\[
\langle t,x,y,z\mid xz=zx,\medspace yz=zy,\medspace 
txt^{-1}=y^{-5}z^{-8}\! ,\medspace tyt^{-1}=y^2z^3\! ,\medspace 
tzt^{-1}=xz^{-7}\rangle.
\]
illustrate the possibilities.

In fact all torsion-free solvable 2-knot groups have deficiency $-1$ or 0. 
(See Chapter 16 of \cite{Hi}.) 
Thus examples with $\beta=1$ and deficiency $-2$
must have abelianization $\mathbb{Z}\oplus{T}$, 
where $T$ is a nontrivial finite group.
Let $M$ be a $\mathbb{N}il^3$-manifold with Seifert base $S(2,2,2,2)$.
Then $\pi=\pi_1(M)\times\mathbb{Z}$ is orientable, 
and $\beta_1(\pi;\mathbb{F}_2)=4$.
Hence $def(\pi)=-2$, by Lemma 1.
The group $\mathbb{Z}^3\rtimes_{-I}\mathbb{Z}$ is a simpler example, 
but is non-orientable.
 
\subsection{} $\beta=0$.
Every such group is non-orientable, and is an extension of 
the infinite dihedral group $D_\infty$ by a normal subgroup $K$, 
by Lemma 3.14 of \cite{Hi}.
Hence $\pi\cong{G*_KH}$ where $G,H$ and $K$ are subgroups of Hirsch length 3,
and $[G:K]=[H:K]=2$.
Since $G,H$ and $K$ are torsion free and virtually poly-$Z$ of Hirsch length 3,
they each have deficiency 0,
and can be generated by at most $3$ elements, by Lemma 6.
Hence $\pi$ is 4-generated. 

\begin{lemma}
Let $G$ be a torsion-free virtually poly-$Z$ group of Hirsch length $3$,
and let $K$ be a subgroup of index $2$. 
Then $G$ can be generated by three elements $u,v,w$ 
such that $u^2,v,w$ generate $K$.
\end{lemma}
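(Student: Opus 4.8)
The plan is to avoid any case analysis over the three geometries and instead exploit the fact, furnished by Lemma 6, that $G$ itself is $3$-generated. The idea is that once we have a generating triple for $G$ adapted to the index-$2$ subgroup $K$, a simple counting argument will force the squared generator together with the other two to generate exactly $K$.

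First I would invoke Lemma 6 to write $G=\langle x,y,z\rangle$. Since $[G:K]=2$ the subgroup $K$ is normal, and the quotient map $\phi\colon G\to G/K\cong\mathbb{Z}/2\mathbb{Z}$ is surjective, so not all of $\phi(x),\phi(y),\phi(z)$ are trivial. After relabelling I may assume $\phi(x)\neq0$, i.e.\ $x\notin K$. If $\phi(y)\neq0$ I replace $y$ by $x^{-1}y$, and similarly for $z$; this leaves $\langle x,y,z\rangle=G$ unchanged (as $y=x\cdot x^{-1}y$) but arranges $y,z\in K$.

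The key step is then to set $L=\langle x^2,y,z\rangle$ and prove $L=K$. Clearly $L\leq K$, since $x^2,y,z\in K$. Because $y,z\in L$, the quotient $G/L$ is generated by the image of $x$; and since $x^2\in L$ this image has order dividing $2$, so $G/L$ is cyclic of order at most $2$. On the other hand $L\leq K$ yields a surjection $G/L\twoheadrightarrow G/K\cong\mathbb{Z}/2\mathbb{Z}$, so $|G/L|\geq2$. Hence $|G/L|=2$ and $L=K$. Taking $u=x$, $v=y$, $w=z$ then gives $G=\langle u,v,w\rangle$ with $K=\langle u^2,v,w\rangle$, as required.

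I expect the only real subtlety to be psychological rather than technical: the naive attempt to pick an arbitrary $u\in G\setminus K$ and then complete $u^2$ to a generating set of $K$ can fail (for example, if $u$ is an imprimitive element of $G\cong\mathbb{Z}^3$, then $u^2$ need not extend to a basis of $K$). Drawing $u$ from a generating triple of $G$ and controlling everything through the index via $G/L$ is precisely what lets us sidestep verifying any primitivity of $u^2$ directly, and keeps the argument uniform across the flat, $\mathbb{N}il^3$ and $\mathbb{S}ol^3$ cases.
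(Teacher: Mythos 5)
Your reduction to the case $y,z\in K$ is fine, but the key counting step is invalid: $L=\langle x^2,y,z\rangle$ need not be normal in $G$, so ``$G/L$'' is not a group and the conclusion $[G:L]\leq2$ does not follow. What your argument actually needs is that $L\cup xL$ is closed under multiplication, i.e.\ that $x$ normalizes $L$ (equivalently $xyx^{-1},xzx^{-1}\in{L}$), and nothing guarantees this: knowing $x^2,y,z\in{L}$ does not let you reduce an arbitrary word in $x,y,z$ to the form $l$ or $xl$, because pushing an $x$ past a $y$ produces conjugates such as $x^{-1}yx$ that may lie outside $L$. In a free group the step is blatantly false: for $G=F(2)=\langle x,y\rangle$ and $K=\ker(x\mapsto1,\,y\mapsto0)$, the subgroup $K$ is free of rank $3$ while $\langle x^2,y\rangle$ is free of rank $2$, hence of \emph{infinite} index in $K$, even though $x^2,y\in{K}$ and the images of $x,y$ ``generate $G/L$ with $x^2\in{L}$''.

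The failure persists inside the class of groups in the lemma, so it is not rescued by poly-$Z$-ness. Let $G=\mathbb{Z}^2\rtimes_A\mathbb{Z}$ with $A=\left(\begin{smallmatrix}2&1\\1&1\end{smallmatrix}\right)$, a torsion-free poly-$Z$ group of Hirsch length $3$ (a $\mathbb{S}ol^3$-manifold group), write $t$ for a generator of the $\mathbb{Z}$ factor and $a,b$ for the standard basis of $\mathbb{Z}^2$, and let $K$ be the preimage of $2\mathbb{Z}$, so $[G:K]=2$. The triple $x=t^3$, $y=t^2$, $z=a$ generates $G$ (since $t\in\langle t^2,t^3\rangle$ and $tat^{-1}=a^2b$ gives $b$), and $y,z\in{K}$. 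But $L=\langle x^2,y,z\rangle=\langle t^2,a\rangle$ meets $\mathbb{Z}^2$ in the smallest $A^{\pm2}$-invariant sublattice containing $a$, which is $\mathbb{Z}(1,0)\oplus\mathbb{Z}(0,3)$; hence $[K:L]=3$ and $[G:L]=6$, not $2$. So the lemma cannot be proved by taking an \emph{arbitrary} generating triple and squaring the generator outside $K$: the triple must be chosen adapted to the structure of the pair $(G,K)$. That is exactly what the paper's proof does --- it takes a normal subgroup $N$ of Hirsch length $2$ with $G/N\cong\mathbb{Z}$ or $D_\infty$ and constructs the generators case by case (according to whether $K\cap{N}=N$ or has index $2$ in $N$), so that the relevant subgroups really are normalized. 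The case analysis you hoped to sidestep is not cosmetic; it is the substance of the proof, and the subtlety you dismissed as psychological is precisely where your argument breaks.
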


\begin{proof}
If $K$ is 2-generated there is nothing to prove, 
so we may assume that either $G$ maps onto $\mathbb{Z}$, 
or that $G$ maps onto $D_\infty$.
Thus $G$ has a normal subgroup $N$ such that $G/N\cong\mathbb{Z}$ or $D_\infty$.
Since $N$ has Hirsch length 2 it is either $\mathbb{Z}^2$ or $\pi_1(Kb)$.
If $K\cap{N}$ has index 2 in $N$ the result follows from the corresponding result for groups of Hirsch length 2, i.e., 
that $N$ can be generated by two elements $x,y$ such that  
$x^2,y$ generates $K\cap{N}$.
 
Thus we may assume that $K\cap{N}=N$.
If $G/N\cong\mathbb{Z}$ let $t\in{G}$ represent a generator of $G/N$.
Then $K/N$ is generated by $t^2$ and the result is clear.
 If $G/N\cong{D_\infty}$ let $u,v\in{G}$ represent generators of order 2 for $G/N$.
Then $P=\langle{N,u}\rangle$ and $Q=\langle{N,v}\rangle$ have Hirsch length 2,
and $G\cong{P*_NQ}$. 
We see easily that $N$ can be generated by $u^2$ or $v^2$ and one other element.
Considering the subgroups of index two in $D_\infty$, we see that $K/N$ is generated by
either $u$ and $(uv)^2$ or $v$ and $(uv)^2$. 
in the first two cases the result is clear.
In the third case it is also clear,
since $G/N$ is generated by $u$ and $uv$, and $u^2$ is a generator for $N$.
\end{proof}

Using this lemma we may obtain a presentation for $\pi$ 
by adjoining one generator $u$ for $G$ and three relations $u^2\in{K}$,  
$uvu^{-1}\in{K}$ and $uwu^{-1}\in{K}$ 
to a presentation for $H$ of deficiency 0.
Hence $def(\pi)\geq-2$.

There are four flat 4-manifold groups with $\beta=0$.
Presentations for these groups are given in \S4 of Chapter 8 of \cite{Hi}.
The presentation given for $G_2*_\phi{B_2}$ has deficiency $-1$.
However the final relation is redundant, 
and this group has a presentation
\[
\langle{s,t}\mid{st^2s^{-1}=t^{-2}},~sts^2t^{-1}s^{-1}=ts^{-2}t^{-1}\rangle.
\]
Hence $def(G_2*_\phi{B_2})=0$.
If $\pi=G_2*_\phi{B_1}$, $G_2*_\phi{G_2}$ or $G_6*_\phi{B_3}$
then $ \beta_1(\pi;\mathbb{F}_2)=3$, and so $def(\pi)\leq-1$ in each case, by Lemma 1.
Hence the first two of these groups have deficiency $-1$.
The presentation given for $G_6*_\phi{B_3}$ has deficiency $-2$,
but we do not know whether this is optimal.

Let $M$ be the $\mathbb{N}il^3$-manifold with Seifert base $S(2,2,2,2)$
and generalized Euler invariant $\frac12$.
Then $K=\pi_1(M)$ has $\Gamma$ as a subgroup of index 2.
Let $\pi=K*_\Gamma{K}$ be the amalgamated free product 
of two copies of $K$ over this subgroup.
Then $\beta=0$, but $\beta_1(\pi;\mathbb{F}_2)=4$, and so $def(\pi)=-2$.

\subsection{}
In higher dimensions the situation is much less clear.

Let $\pi$ be a torsion free, virtually poly-$Z$ group of Hirsch length $n$.
Then $\pi$ is $(g+1)$-generated if every finite quotient of $\pi$ is $g$-generated, 
and there are such groups which are not $g$-generated \cite{LW}.
Are all such groups $n$-generated?

Let $\pi^\tau$ be the preimage in $\pi$ of the torsion subgroup of $\pi/\pi'$,
and let $\gamma$ be the rank of $\pi^\tau/[\pi,\pi^\tau]$.
Then $\gamma\leq{n}-\beta$.
It follows from the five-term exact sequence of low degree
for $\pi$ as an extension of $\mathbb{Z}^\beta$ by $\pi^\tau$
that $\beta_2(\pi;\mathbb{Z})\geq\binom\beta2-\gamma$.
Since $def(\pi)\leq\beta-\beta_2(\pi;\mathbb{Z})$, we have
a universal upper bound
\[
def(\pi)\leq{n}-\binom\beta2.
\]
(In particular, if $\beta\geq\frac12+\sqrt{3n}$ then $def(\pi)\leq2-\lfloor\frac{n+1}2\rfloor$.)

An easy induction using Lemma 4 shows that 
if $\pi$ is a poly-$Z$ group then it 
has a presentation with $n$ generators and $\binom{n}2$ relations,
and so 
\[
n-\binom{n}2\leq{def}(\pi).
\]
If $\pi=\mathbb{Z}^n$ then these bounds agree, 
and so $def(\mathbb{Z}^n)=n-\binom{n}2$.

Finitely generated torsion free nilpotent groups 
form an important special case.
These are poly-$Z$ groups.
The relatively free nilpotent groups $F(\beta)/F(\beta)_{[k+1]}$ 
have natural presentations with $\beta$ generators 
and $r_{k+1}(\beta)$ relations, 
where
\[
r_k(\beta)=\frac1k\Sigma_{d|k}\mu(d)\beta^{\frac{k}d}.
\]
is the rank of the section $F(\beta)_{[k]}/F(\beta)_{[k+1]}$, 
by Witt's formulae.
(Here $\mu$ is the M\"obius function. 
See Theorems 5.11 and 5.12 of \cite{MKS}.)
It follows immediately from the homology exact sequence of low degree
for $F(\beta)$ as an extension of $F(\beta)/F(\beta)_{[k+1]}$ by $F(\beta)_{[k+1]}$ that
\[
H_2(F(\beta)_{[k]}/F(\beta)_{[k+1]};\mathbb{Z})\cong
{F(\beta)_{[k+1]}}/F(\beta)_{[k+2]}.
\]
(This is a special case of Hopf's formula.)
Hence these presentations are optimal.
The Hirsch length of $F(\beta)/F(\beta)_{[k+1]}$ is 
\[
n(\beta,k)=\Sigma_{j\leq{k}}r_k(\beta)=
\Sigma_{st\leq{k}}\frac1{st}\mu(s)\beta^t.
\]
We may assume that $\beta>1$.
The leading term is $\frac{\beta^k}k$, and ${|n(\beta,k)|-\frac{\beta^k}k|<\beta^{k-1}}$.
Hence
\[
def(F(\beta)/F(\beta)_{[k+1]})=
\beta-r_{k+1}(\beta)\sim-\frac{k}{k+1}\beta{n(\beta,k)}.
\]
(This estimate is less than $n(\beta,k)-\binom\beta2$, for all $\beta\leq{n(\beta,k)}$ and $k\geq1$.)
Among  all $\beta$-generated nilpotent groups of nilpotency class $k$,
the relatively free group has maximal Hirsch length 
and surely requires the fewest relations,
and so this may be close to a universal upper bound 
for the deficiencies of such groups.

When $k\leq2$ and $\pi/\pi'\cong\mathbb{Z}^\beta$ we can be more precise.
If $\beta\leq{n}\leq\binom{\beta+1}2$ then
the torsion free quotients of $F(\beta)/F(\beta)_{[3]}$ 
by central subgroups $C$ of rank $\binom{\beta+1}2-n$ 
have Hirsch length $n$ and presentations with $\beta$ generators 
and $(\binom{\beta+1}2-n)+\beta(n-\beta)$ relations.
(The first term corresponds to a basis for the kernel $C$,
while the second  reflects the fact that the commutator subgroup is central.)
If $\pi$ is such a quotient of $F(\beta)/F(\beta)_{[3]}$  then 
\[
n-\binom\beta2-\beta(n-\beta)\leq{def(\pi)\leq{n}}-\binom\beta2.
\]
Note that  $k=1\Leftrightarrow\beta=n\Leftrightarrow\pi=\mathbb{Z}^n$,
and $def(\mathbb{Z}^n)=n-\binom{n}2$ minimizes the lower bound.
(Moreover, the lower bound is  less than $-\frac23\beta{n}$ if and only if $3<\beta<\frac23n$.)

If  $\pi=\Gamma\times\mathbb{Z}$,
then $\beta=3$ and $n=4$, and $def(\Gamma\times\mathbb{Z})=-1$, 
which is strictly between these bounds.
The estimate for the number of relations used above 
overlooks the fact that if one generator $t$ commutes with two others 
$u$ and $v$ then it commutes with $[u,v]$.
Thus  the relation $t[u,v]=[u,v]t$ is redundant if $[u,v]\notin{C}$.
When $C$ has a basis $B$ consisting of commutators $[x_i,x_j]$, 
where $\{x_1,\dots, x_\beta\}$ is a basis for $F(\beta)$, the lower bound should be increased
by the difference between the number of triples $i<j<k$ with two of $[x_i,x_j]$, $[x_i,x_k]$,  and
$[x_j,x_k]$ in $B$ and the number of  such triples for which all three are in $B$.
Can this correction be formulated in a manner independent of special bases?
Is it the rank of the subspace of Massey triple products in $H^2(\pi;\mathbb{Z})$?

From another perspective, if $G$ is a nilpotent group
with a minimal generating set of $g\geq3$ elements 
then $def(G)<g-\frac{g^2}4$, by Theorem 2.7 of \cite{Lu}.
(Lubotzsky's argument extends the Golod-Shafarevitch inequality
to the pro-$p$ completion of $G$, for suitable primes $p$.)

An example which is far from nilpotent is given by
the group $G$ with presentation 
\[
\langle{t,x}\mid{xt^ixt^{-i}=t^ixt^{-i}x ~\forall~1\leq{i}<\lfloor\frac{n+1}2\rfloor,~t^{n-1}xt^{1-n}=xw}\rangle,
\]
where $n>1$, and $w$ is a word in the conjugates $t^ixt^{-i}$ with $1\leq{i}\leq{n-2}$. 
Let $k=\lfloor\frac{n-1}2\rfloor$.
Conjugating the relations $xt^ixt^{-i}=t^ixt^{-i}x $ by powers of $t$ 
shows that $t^kxt^{-k}$ commutes with all its conjugates.
It follows easily that $G\cong\mathbb{Z}^{n-1}\rtimes_T\mathbb{Z}$,
where $T$ is the companion matrix for a monic polynomial with constant term $\pm1$.
Thus $G$ is a 2-generated poly-$Z$ group of Hirsch length $n$,
and ${def}(G)\geq2-\lfloor\frac{n+1}2\rfloor$.

If $w=1$ then $G$ has a free abelian subgroup of rank $n$ and index $n-1$,
and so 
\[
(n-1)(def(G)-1)\leq{def(\mathbb{Z}^n)-1}=-\frac12(n-1)(n-2),
\]
by Lemma 3.
Hence $def(G)\leq2-\frac{n}2$.
Since $def(G)$ is an integer, we see that the above presentation is optimal,
and so ${def}(G)=2-\lfloor\frac{n+1}2\rfloor$.

Does every torsion free, virtually poly-$Z$ group $\pi$ of Hirsch length $n$
satisfy
\[
n-\binom{n}2\leq{def(\pi)}\leq2-\lfloor\frac{n+1}2\rfloor?
\]

\section{abstract commensurators of poly-$Z$ groups of Hirsch length 4}

If $G\leq\pi$ the {\it commensurator of $G$ in $\pi$} is
\[Comm_\pi(G)=\{\alpha\in\pi\mid [G:G\cap\alpha G\alpha^{-1}]<\infty,
~[G:G\cap\alpha^{-1} G\alpha]<\infty\}.\]
The {\it abstract commensurator\/} of a group $G$ is $Comm(G)$, the group
of equivalence classes of isomorphisms $\alpha:H\cong J$ between subgroups of
finite index in $G$, where $\alpha$ and $\alpha'$ are equivalent if they agree
on some common subgroup of finite index, and the product of the equivalence
classes of $\alpha$ and $\beta$ is represented by the partially defined
composite $\alpha\circ\beta$.
Conjugation in $G$ determines a natural homomorphism $\gamma:G\to{Comm(G)}$,
and $Comm_\pi(G)=\{\alpha\in\pi\mid{c_\alpha}\in Comm(G)\}$,
where $c_\alpha$ is conjugation by $\alpha$ in $\pi$.
If $H$ has finite index in $G$ then we may identify $Comm(H)$ with $Comm(G)$. 
Therefore if $G_1$ and $G_2$ are commensurable $Comm(G_1)\cong Comm(G_2)$.
If $G$ has only finitely many subgroups of finite index then
on letting $H$ be their intersection we find that 
$Comm_\pi(G)=N_\pi(H)$ and $Comm(G)=Aut(H)$. 

Every finitely generated torsion free nilpotent group is commensurate with
a subgroup of  the upper triangular matrix group $U(n,\mathbb{Z} )$ (for $n$ large enough).
Conversely, every subgroup of $U(n,\mathbb{Z})$ is a finitely generated torsion free nilpotent group.
The corresponding subgroups $N_\mathbb{Q}$ of  $SL(n,\mathbb{Q})$ are the rational Mal'cev completions  \cite{Ma}.
It is well known that $Comm(\mathbb{Z}^n)\cong{Aut(\mathbb{Q}^n)=GL(n,\mathbb{Q})}$;
a simple extension of this result settles the nilpotent case.

\begin{theorem}
Let $N$ be a finitely generated torsion free nilpotent group.
Then $Comm(N)\cong{Aut(N_\mathbb{Q})}$.
\end{theorem}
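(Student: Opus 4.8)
The plan is to construct an explicit isomorphism $\Phi\colon Comm(N)\to Aut(N_\mathbb{Q})$ by extending partial isomorphisms of $N$ to the rational Mal'cev completion, relying on two standard properties of that completion: it is functorial with \emph{unique} extensions (any homomorphism from a finitely generated torsion-free nilpotent group into a radicable group extends uniquely to the radicable hull), and passing to a subgroup of finite index does not change the hull. A class in $Comm(N)$ is represented by an isomorphism $\alpha\colon H\to J$ between subgroups of finite index in $N$. Since $H$ and $J$ are again finitely generated torsion-free nilpotent groups, and every element of $N$ has a power lying in any given finite-index subgroup, we have $H_\mathbb{Q}=J_\mathbb{Q}=N_\mathbb{Q}$. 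Hence $\alpha$ extends uniquely to an isomorphism $\alpha_\mathbb{Q}\colon H_\mathbb{Q}\to J_\mathbb{Q}$, that is, to an automorphism of $N_\mathbb{Q}$, and I set $\Phi([\alpha])=\alpha_\mathbb{Q}$.

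Next I would check that $\Phi$ is a well-defined homomorphism and is injective, all of which follow formally from the uniqueness of the extension. If $\alpha$ and $\alpha'$ agree on a common finite-index subgroup $H'$, then $\alpha_\mathbb{Q}$ and $\alpha'_\mathbb{Q}$ agree on $H'_\mathbb{Q}=N_\mathbb{Q}$ and so coincide, so $\Phi$ respects the defining equivalence relation; likewise the extension of a partially defined composite is the composite of the extensions, so $\Phi$ is a homomorphism. For injectivity, if $\alpha_\mathbb{Q}=\mathrm{id}$ then $\alpha$ fixes every element of $H\subseteq N_\mathbb{Q}$, so $[\alpha]$ is already the identity of $Comm(N)$.

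The main work, and the step I expect to be the genuine obstacle, is surjectivity, where the arithmetic of lattices enters. Given $\phi\in Aut(N_\mathbb{Q})$, the image $\phi(N)$ is a finitely generated torsion-free nilpotent subgroup of $N_\mathbb{Q}$ isomorphic to $N$, with radicable hull $\phi(N_\mathbb{Q})=N_\mathbb{Q}$; thus $N$ and $\phi(N)$ are both lattices in the same simply connected nilpotent completion. The key fact I would invoke is that any two such lattices are commensurable, so that $M=N\cap\phi(N)$ has finite index in each. Granting this, put $H=\phi^{-1}(M)$. Since $M\leq\phi(N)$ we have $H\leq N$, and as $\phi$ is injective $[N:H]=[\phi(N):M]<\infty$; moreover $\phi(H)=M\leq N$ has finite index in $N$. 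Hence $\phi|_H\colon H\to M$ represents a class in $Comm(N)$ whose image under $\Phi$ is, by uniqueness of extension, exactly $\phi$. The only nontrivial ingredient is the commensurability of lattices in a nilpotent Lie group, which is classical (Mal'cev); the anchoring case $N=\mathbb{Z}^n$, where $N_\mathbb{Q}=\mathbb{Q}^n$ and $Comm(\mathbb{Z}^n)=GL(n,\mathbb{Q})=Aut(\mathbb{Q}^n)$, recovers the quoted result and confirms the normalization.
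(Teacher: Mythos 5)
Your proof is correct, and its skeleton matches the paper's: both construct the map $Comm(N)\to Aut(N_\mathbb{Q})$ by extending a partial isomorphism $\alpha\colon H\to J$ to the common Mal'cev completion, with well-definedness and injectivity coming from uniqueness of the extension (which you make explicit; the paper treats it as clear). The genuine divergence is in surjectivity. The paper works concretely: it first embeds $N$ in $U(n,\mathbb{Z})$ with $N/N'$ torsion free, takes generators representing a basis of $N/N'$, and observes that the images of these generators under $g\in Aut(N_\mathbb{Q})$ have bounded denominators, so $g$ carries some finite-index subgroup $J$ into $N$ (with $g(J)$ again of finite index). You instead quote the classical fact that two finitely generated full subgroups of the same radicable hull are commensurable, apply it to $N$ and $\phi(N)$, and pull back $M=N\cap\phi(N)$. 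These are the same mathematical content packaged differently: the bounded-denominator computation \emph{is} the standard proof of the commensurability fact you invoke, so the paper's route is self-contained while yours trades the computation for a citation and stays coordinate-free (no matrix embedding needed). One point to sharpen: your closing attribution to ``commensurability of lattices in a nilpotent Lie group'' is literally false for the real completion --- already in $\mathbb{R}^2$ an irrationally rotated copy of $\mathbb{Z}^2$ is not commensurable with $\mathbb{Z}^2$ --- and what you actually need, and correctly use earlier, is the statement for two finitely generated subgroups with the \emph{same rational} hull $N_\mathbb{Q}$; since $\phi$ is an automorphism of $N_\mathbb{Q}$ rather than merely of the associated Lie group, your argument does stay within the rational setting, but the reference should be stated that way.
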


\begin{proof}
We may assume that $N\leq{U(n,\mathbb{Z})}$ and that
$N/N'$ is torsion free (after passing to a subgroup of finite index, if necessary).
Then $N$ is generated by elements representing a basis for $N/N'$.
If $J$ is a subgroup of finite index in $N$ then the inclusion $i_J$ induces an isomorphism
$i_{J\mathbb{Q}}:J_\mathbb{Q}\cong{N_\mathbb{Q}}$.
If $\alpha:J\to{N}$ is injective then $\alpha(J)$ also has finite index in $N$.
The composite $f(\alpha)=i_{f(J)\mathbb{Q}}\alpha_\mathbb{Q}i_{J\mathbb{Q}}^{-1}$ is an automorphism of $N_\mathbb{Q}$.
Clearly $f$ defines an injective homomorphism from $Comm(N)$ to $Aut(N_\mathbb{Q}$.
Conversely, if $g$ is an automorphism of $N_\mathbb{Q}$ then the images of the generators of $N$
under $g$ have bounded denominators.
Hence there is a subgroup $J$ of finite index such that $g(J)\leq{N}$,
which again has finite index in $N$, and so $f$ is onto.
\end{proof}

The situation is somewhat more complicated when $\pi$ is not virtually nilpotent.
By a fundamental theorem of Mal'cev, 
every virtually poly-$Z$ group has a subgroup of finite index with
nilpotent commutator subgroup.
(See Proposition 15.1.6 of \cite{Ro}.)
In our situation, $\sqrt\pi$ has Hirsch length 3 or 4,
by Theorem 1.6 of \cite{Hi}, and so $\pi/\sqrt\pi$ is virtually $\mathbb{Z}$ or finite.
Thus if $\pi$ is not virtually nilpotent it is commensurate with a semidirect product
$\sqrt\pi\rtimes\mathbb{Z}$, where $\sqrt\pi\cong\mathbb{Z}^3$ or $\Gamma$.
There are three cases: 
 either (i) $\pi\cong\mathbb{Z}^3\rtimes_A\mathbb{Z}$ and no eigenvalue of $A$ is a root of unity;
or (ii) $\pi\cong\mathbb{Z}\times(\mathbb{Z}^2\rtimes_B\mathbb{Z})$
where no eigenvalue of $B$ is a root of unity;
or (iii) $\pi\cong\Gamma\rtimes_\rho\mathbb{Z}$ and the automorphism $\rho^{ab}$
induced on the abelianization has no eigenvalues which are roots of unity.

When $\sqrt\pi\cong\mathbb{Z}^3$ and no eigenvalue of $A$ is a root of unity
it is fairly easy to give representatives for the classes in $Comm(\pi)$.
Let $t\in\pi$ represent a generator of $\pi/\sqrt\pi$ and be such that conjugation by $t$ induces the automorphism $A$.
Then  $\pi$ has a cofinal system of finite index subgroups of the form $\langle{m}\sqrt\pi,t^kw\rangle$,
with $w\in{m}\sqrt\pi$.
(Note that  $(t^kw)^\ell=t^{k\ell}\nu_\ell(A^k)w$, for $\ell\geq1$,
where $\nu_\ell(X)$ is the polynomial $(X^\ell-1)/(X-1)=\Sigma_{i<\ell}X^i$.
Hence $\nu_\ell(A^k)w\in{m}\sqrt\pi$ for some $\ell$, 
since $\sqrt\pi/m\sqrt\pi$ is finite.)
Since $A$ has no eigenvalues which are roots of unity, no two distinct powers of $t$ are conjugate.
(In particular, $A$ is not conjugate to its inverse.)
Therefore a homomorphism $\alpha$ from one of these subgroups into $\pi$ must map $t^kw$ to $t^kw'$ 
for some $w'\in\sqrt\pi$.
The partial isomorphism $\alpha$ extends to an automorphism $\alpha_\mathbb{Q}$
of $\sqrt\pi_\mathbb{Q}\cong\mathbb{Q}^3$ which commutes with $A^k$.
Thus $\alpha$ determines a triple $(B,w,k)$ with $B\in{GL(3,\mathbb{Q})}$ such that $BA^k=A^kB$
and $w\in\mathbb{Z}^3$.
Conversely, if $B\in{GL(3,\mathbb{Q})}$ then the entries have a common denominator, and so 
the restriction of $B$ to $m\mathbb{Z}^3$ has image in $\mathbb{Z}^3$.
Thus every such triple represents an element of $Comm(\pi)$.
We must also consider the equivalence relation necessary to cope with restriction to a subgroup
$\langle{mp}\mathbb{Z}^n,t^{k\ell}\rangle$, where $\ell>1$.
Define an equivalence relation $\sim$ on such triples $(B,w,k)$ 
by $B(w,k)\sim(B,\nu_\ell(A^k),k\ell)$ for all $\ell\geq1$.
Then $Comm(\pi)$ is the set of equivalence classes, with composition represented by
$(B,w,k)(B',w',k)=(BB',w+Bw',k)$, where we may assume $k$ so large that $Bw'\in\sqrt\pi$ also.

The other two cases are similar, except that we must now allow for the possibility
that $B$  or $\rho$ is conjugate to its inverse.
We shall not discuss them further.

\section{abstract commensurators and deficiency}

Let $Hol(G)=G\rtimes{Aut(G)}$, with the natural action of $Aut(G)$ on $G$.
If $G\leq\pi$ let $C_\pi(G)$ be the centralizer of $G$ in $\pi$,
$\gamma_\pi(G):Comm_\pi(G)\to Comm(G)$ be the natural homomorphism 
and $K_\pi(G)=\mathrm{Ker}(\gamma_\pi(G))$.
Let $K_{aut}(G)=Aut(G)\cap{K_{Hol(G)}(G)}$ be the kernel 
of the natural homomorphisms from $Aut(G)$ to $Comm(G)$.
Clearly $K_\pi(H)=H\cap{K_\pi(G)}$ if $H$ is a subgroup of finite index in $G$.

\begin{lemma}
Let $G\leq\pi$ be a pair of groups, with $G$ finitely generated.
Then $K_\pi(G)=\cup{C_\pi(H)}$ and $K_G=\cup{C_G(H)}$,
where the unions are taken over all normal subgroups $H$ of finite index in $G$.
\end{lemma}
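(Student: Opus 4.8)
The plan is to establish the first equality $K_\pi(G)=\bigcup_H C_\pi(H)$, where the union runs over all normal subgroups $H$ of finite index in $G$, and then to recover the second as the special case $\pi=G$: in that case $Comm_G(G)=G$ (since $\alpha G\alpha^{-1}=G$ for every $\alpha\in G$) and $\gamma_G(G)$ is precisely the natural map $\gamma:G\to Comm(G)$, so $K_G(G)=K_G$ and $C_G(H)$ is the centralizer of $H$ in $G$. Thus the whole statement reduces to a single unwinding of the definitions of $Comm_\pi(G)$, of $Comm(G)$, and of the trivial element of $Comm(G)$.

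For the inclusion $\bigcup_H C_\pi(H)\subseteq K_\pi(G)$, I would take $\alpha\in C_\pi(H)$ with $H\trianglelefteq G$ of finite index. Since conjugation by $\alpha$ fixes every element of $H$, we have $\alpha H\alpha^{-1}=H$, so both $G\cap\alpha G\alpha^{-1}$ and $G\cap\alpha^{-1}G\alpha$ contain $H$ and hence have finite index in $G$; therefore $\alpha\in Comm_\pi(G)$. Moreover $c_\alpha$ restricts to the identity on the finite-index subgroup $H$, so $c_\alpha$ is the identity class in $Comm(G)$ and $\alpha\in K_\pi(G)$.

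For the reverse inclusion I would start from $\alpha\in K_\pi(G)$, so that $\alpha\in Comm_\pi(G)$ and $c_\alpha$ is the trivial element of $Comm(G)$. By the definition of the equivalence relation defining $Comm(G)$, triviality of $c_\alpha$ means that conjugation by $\alpha$ agrees with the identity automorphism on some subgroup $H_0$ of finite index in $G$; equivalently $\alpha$ centralizes $H_0$, so $H_0\leq C_G(\alpha)$ and $C_G(\alpha)$ has finite index in $G$. The centralizer need not be normal, and this is the one point that requires care: I would replace it by its normal core $H=\bigcap_{g\in G}gC_G(\alpha)g^{-1}$, which is normal of finite index (the core of an index-$n$ subgroup is the kernel of the permutation action on cosets, hence of index dividing $n!$) and is still centralized by $\alpha$. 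Then $\alpha\in C_\pi(H)$ with $H$ normal of finite index, as required.

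I expect no serious obstacle: the argument is a direct translation of the definitions, and no hypothesis beyond the finiteness of the relevant indices is actually invoked (the normal core is finite-index for any group, so finite generation of $G$ is not needed for this lemma, only as a standing hypothesis of the section). The only genuinely delicate points are the correct reading of the identity element of $Comm(G)$ as ``agreement with the identity on a common finite-index subgroup,'' and the passage via the normal core from an arbitrary finite-index subgroup centralized by $\alpha$ to a normal one, which is exactly what allows the union to be taken over \emph{normal} finite-index subgroups as stated.
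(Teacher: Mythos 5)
Your proof is correct and matches the paper's argument, which is just the one-line observation that normal subgroups are cofinal among finite-index subgroups: your passage through the normal core is exactly the standard proof of that cofinality, and the rest is the same unwinding of the definitions of $Comm_\pi(G)$ and of the trivial class in $Comm(G)$. Your side remark is also accurate: since the core of an index-$n$ subgroup has index dividing $n!$ in any group, finite generation of $G$ is not actually needed here, even though the paper's one-line proof phrases the cofinality claim for finitely generated groups.
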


\begin{proof}
This is clear, since normal subgroups are cofinal among all
subgroups of finite index in a finitely generated group.
\end{proof}

\begin{theorem}
Let  $G$ be a finitely generated group. Then
\begin{enumerate}

\item ${K_G}'$ and $K_G/\sqrt{K_G}$  are locally finite:

\item if $G$ has no nontrivial locally-finite normal subgroup
then $K_G$ is torsion-free abelian;

\item if $C_G(H)$ is finite for all normal subgroups $H$ of finite index in $G$
then $K_G$ and $K_{aut}(G)$ are locally finite;

\item if $G$ has no infinite elementary amenable normal subgroup 
then $K_G$ is the maximal finite normal subgroup of $G$,
and $Comm(G)$ maps injectively to $Comm(G/K_G)$.
\end{enumerate}
\end{theorem}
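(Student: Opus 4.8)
The whole argument rests on the preceding lemma, which writes $K_G$ as the directed union $\bigcup_H C_G(H)$ over the finite-index normal subgroups $H\trianglelefteq G$, together with a single structural observation. For each such $H$ the subgroup $C_G(H)\cap H$ lies in $\zeta(C_G(H))$ --- its elements commute with $H$ and, lying in $H$, with all of $C_G(H)$ --- and has index at most $[G:H]$ there, so each $C_G(H)$ is centre-by-finite. Moreover $C_G(H)$ is normal in $G$, since $gC_G(H)g^{-1}=C_G(gHg^{-1})=C_G(H)$. I would record at once two consequences: by Schur's theorem each $C_G(H)'$ is finite, and $\zeta(C_G(H))$, being characteristic in the normal subgroup $C_G(H)$, is an abelian --- hence locally nilpotent --- normal subgroup of $K_G$, and so is contained in $\sqrt{K_G}$.

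Part (1) then follows quickly. Every commutator of elements of $K_G$ lies in some $C_G(H)'$, so ${K_G}'=\bigcup_H C_G(H)'$ is a directed union of finite groups and is locally finite. For the quotient, the image of $C_G(H)$ in $K_G/\sqrt{K_G}$ is a quotient of $C_G(H)/\zeta(C_G(H))$, since $\zeta(C_G(H))\leq\sqrt{K_G}$, hence is finite; these images form a directed family exhausting $K_G/\sqrt{K_G}$, which is therefore locally finite.

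Parts (2) and (3) are then short. For (2), ${K_G}'$ is locally finite and characteristic in $K_G\trianglelefteq G$, hence a locally-finite normal subgroup of $G$; the hypothesis forces ${K_G}'=1$, and the same reasoning applied to the (characteristic, locally finite) torsion subgroup of the abelian group $K_G$ shows $K_G$ is torsion-free abelian. For (3), each $C_G(H)$ is now finite, so $K_G$ is a directed union of finite groups and is locally finite. To treat $K_{aut}(G)$ I would use the preceding lemma for the pair $G\leq Hol(G)$ to identify $K_{aut}(G)$ with $\bigcup_H Aut(G)_{(H)}$, the union of the pointwise stabilizers of the $H$ in $Aut(G)$; fixing generators $g_1,\dots,g_d$ of $G$, any $\phi$ fixing $H$ pointwise has each $g_i^{-1}\phi(g_i)$ centralizing $H$, hence lying in the finite group $C_G(H)$, so $|Aut(G)_{(H)}|\leq|C_G(H)|^d$, and a directed union of these finite groups is locally finite.

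For part (4), $\sqrt{K_G}$ is locally nilpotent, hence elementary amenable, and characteristic in $K_G\trianglelefteq G$, so the hypothesis makes it finite; together with $K_G/\sqrt{K_G}$ locally finite this forces $K_G$ locally finite, hence elementary amenable and normal, hence finite. It is the maximal finite normal subgroup because any finite $N\trianglelefteq G$ has $C_G(N)$ of finite index, whence $N\leq C_G(C_G(N))\leq K_G$. The main obstacle is the concluding injectivity of $Comm(G)\to Comm(G/K_G)$. I would define this map by descent: a partial isomorphism $\alpha\colon H\cong J$ satisfies $\alpha(H\cap K_G)=J\cap K_G$, since $H\cap K_G$ is the characteristic subgroup of elements of $H$ centralizing a finite-index subgroup, and so induces $\bar\alpha\colon HK_G/K_G\cong JK_G/K_G$. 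If $\bar\alpha$ restricts to the identity on a finite-index $\bar L$, then $l^{-1}\alpha(l)\in K_G$ for all $l$ in the preimage $L$; the delicate point is that $l\mapsto l^{-1}\alpha(l)$ is only a cocycle, which I would linearize by restricting to the finite-index subgroup $L\cap C_G(K_G)$ --- there it becomes a homomorphism into the finite group $K_G$ whose kernel is a finite-index subgroup on which $\alpha$ is the identity, giving $[\alpha]=1$ in $Comm(G)$.
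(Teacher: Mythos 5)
Your proposal is correct and follows essentially the same route as the paper: Lemma 10 writes $K_G=\bigcup_H C_G(H)$, the bound $[C_G(H):\zeta{C_G(H)}]\leq[G:H]$ plus Schur's theorem gives (1), (2) follows by killing the locally finite characteristic subgroups, and your map $\phi\mapsto(g_i^{-1}\phi(g_i))$ into $C_G(H)^d$ is just a repackaging of the paper's $f_\alpha(g)=g\alpha(g^{-1})$ argument for (3). In (4) you actually supply a detail the paper dismisses as ``easy to see'': your restriction of the cocycle $l\mapsto l^{-1}\alpha(l)$ to $L\cap C_G(K_G)$, where it becomes a homomorphism into the finite group $K_G$ with finite-index kernel, is exactly the right way to make the injectivity of $Comm(G)\to Comm(G/K_G)$ rigorous.
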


\begin{proof}
(1) Let $H$ be a subgroup of finite index in $G$.
Then $[C_G(H):\zeta{C_G(H)}]\leq{[C_G(H):C_G(H)\cap{H}]}\leq[G:H]$,
since $C_G(H)\cap{H}\leq\zeta{C_G(H)}$.
Therefore $C_G(H)$ has centre of finite index,
and so $C_G(H)'$ is finite, by Schur's Theorem
(Proposition 10.1.4 of \cite{Ro}).
Let $H_n$ be a cofinal descending sequence of normal subgroups of finite index 
in $G$, and let $K_n=C_G(H_n)$.
Then $K_G=\cup K_n$, by Lemma 10, and so ${K_G}'=\cup{K_n}'$ is locally finite.
Since $K_n$ is  normal in $K_{n+1}$,  $\sqrt{K_n}\leq\sqrt{K_{n+1}}$, for all $n\geq1$,
and so $\cup\sqrt{K_n}\leq\sqrt{K_G}$.
Since $\zeta{K_n}\leq\sqrt{K_n}$ and $[K_n:\zeta{K_n}]$ is finite, for all $n$,
the quotient $K_G/\sqrt{K_G}$ is also locally finite.
(In particular, $K_G$ is elementary amenable.)

2) This follows from (1), since the preimage in $G$ of the torsion of ${K_G}'$ is a 
locally-finite normal subgroup of $G$.

(3) If $C_G(H)$ is finite for all normal subgroups $H$ of finite index in $G$
then $K_G=\cup{C_G(H)}$ is locally finite.
Let $F$ be a finitely generated subgroup of $K_{aut}(G)$.
Then there is a subgroup $H$ of finite index in $G$ such that 
$\alpha(h)=h$ for all $\alpha\in F$ and $h\in H$.
We may assume without loss of generality that $H$ is normal in $G$.
Let $f_\alpha(g)=g\alpha(g^{-1})$ for all $g\in G$ and $\alpha\in{F}$.
Then $f_\alpha(gh)=f_\alpha(g)$ and 
$hf_\alpha(g)h^{-1}=f_\alpha(hg)=f_\alpha(gg^{-1}hg)=f_\alpha(g)$ 
for all $g\in G$ and $h\in H$.
Therefore $f_\alpha$ factors through the finite group $G/H$,
and takes values in $C_G(H)$.
Since $C_G(H)$ is finite and $f_\alpha=f_\beta$ 
if and only if $\alpha=\beta$ it follows that $F$ is finite.
Thus $K_{aut}(G)$ is locally finite.

(4) If $G$ has no infinite elementary amenable normal subgroup
then $K_G$ is finite. 
Moreover if $F$ is the maximal locally finite normal subgroup of $G$
it is finite, and acts trivially on $C_G(F)$, which has finite index in $G$.
It follows easily that $K_G=F$.

Let $f:H\to{J}$ be an isomorphism of subgroups of finite index in $G$.
Then $f$ restricts to an isomorphism $K_H=H\cap{K_G}\to{K_J=J\cap{K_G}}$, 
and so induces an isomorphism 
$\overline{f}:\overline{H}=H/H\cap{K_G}\cong\overline{J}=J/J\cap{K_G}$.
Hence $\rho([f])=[\overline{f}]$ defines a homomorphism from $Comm(G)$ to $Comm(G/K_G)$.
If $\overline{f}=1$ then we may assume that $J=H$ and that $f$ induces the identity 
on $\overline{H}$.
But it is then easy to see that $f$ restricts to the identity on a subgroup of finite index in $H$,
since $K_H=H\cap{K_G}$ is finite.
Hence  $\rho:Comm(G)\to{Comm(G/K_G)}$ is injective.
\end{proof}

In particular, if $\sqrt{G}$ is locally finite then $K_G$ is locally finite.
If, moreover, $G\leq{GL(n,\mathbb{Q})}$ then $K_G$ must be finite, 
since locally finite subgroups of such linear groups are finite.

There is a converse of sorts to part (2).
Let $\Gamma=F(2)/[F(2)'',F(2)]$.
Then $\zeta\Gamma=\Gamma''$ and is free abelian of countable rank.
Every countable abelian group $A\cong\Gamma''/R$ for some  $R\leq\Gamma''$.
Since $R$ is central and $\Gamma/\Gamma''=F(2)/F(2)''$ has trivial centre, 
it follows that $\zeta(\Gamma/R)\cong{A}$.
In fact, this is also $K_{\Gamma/R}$.
In particular, every countable torsion-free abelian group is $K_G$ for some 2-generator 
group with no non-trivial locally finite normal subgroup.
What countable abelian groups are centres of finitely presentable groups?

In part (4), $G/K_G$ has no non-trivial elementary amenable normal subgroup,
and so embeds in $Comm(G/K_G)$.
Is  $\rho$ an isomorphism?
In one significant case this is so.

\begin{theorem}
If there is a homomorphism $f:G\to{P}$ to a $PD_2$-group $P$ with finite kernel 
and $[P:f(G)]$ finite then $G$ is commensurable with $P$.
\end{theorem}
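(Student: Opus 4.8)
The plan is to reduce to the case where $G$ is a finite central extension of an orientable surface group and then to kill the extension class by passing to a suitable cover. First I would replace $P$ by $P'=f(G)$: since $[P:f(G)]<\infty$ and finite-index subgroups of $PD_2$-groups are again $PD_2$-groups, $P'$ is a surface group commensurable with $P$, so it suffices to show that $G$ is commensurable with $P'$. Now $1\to K\to G\to P'\to1$ is exact with $K=\ker f$ finite and $P'$ torsion-free, so every torsion element of $G$ lies in $K$. Hence it is enough to produce a subgroup of finite index in $G$ meeting $K$ trivially: such a subgroup is torsion-free, maps injectively to $P'$ under $f$, and its image has finite index in $P'$, which gives the required commensurability.

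To arrange a central abelian kernel, I would pass to $G_1=C_G(K)$, the kernel of the conjugation action $G\to\mathrm{Aut}(K)$. This is a normal subgroup of finite index, and $f(G_1)$ has finite index in $P'$, so $f(G_1)$ is again a surface group. The kernel of $f|_{G_1}$ is $G_1\cap K=\zeta K$, which is finite abelian and is central in $G_1$ (as $G_1$ centralizes $K\supseteq\zeta K$); thus $1\to\zeta K\to G_1\to f(G_1)\to1$ is a central extension of a surface group by a finite abelian group. Passing further to the preimage in $G_1$ of an orientable finite-index subgroup of $f(G_1)$ (its orientation subgroup has index at most $2$), I may assume the quotient $Q=f(G_1)$ is an orientable $PD_2$-group and write the central extension class as $\omega\in H^2(Q;Z)$ with $Z=\zeta K$ finite and acting trivially.

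The final step is cohomological. For an orientable surface group $Q$ one has $H^2(Q;Z)\cong Z$, and for a subgroup $Q''\leq Q$ of finite index $d$, corresponding to a $d$-fold cover of the underlying closed orientable surface, the restriction map $H^2(Q;Z)\to H^2(Q'';Z)$ is multiplication by $d$ under these identifications (the standard computation that $p^{*}$ equals the degree on top cohomology, since $p_{*}$ carries the fundamental class to $d$ times the fundamental class). Because orientable surface groups surject onto $\mathbb{Z}$ they possess subgroups of every finite index; choosing $d$ equal to the order of $\omega$ in the finite group $Z$ yields $Q''$ with $\mathrm{res}^{Q}_{Q''}(\omega)=d\,\omega=0$. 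The pulled-back extension $1\to Z\to G_2\to Q''\to1$, where $G_2$ is the preimage of $Q''$ in $G_1$, therefore splits, so $G_2\cong Z\times Q''$. The factor $\{1\}\times Q''$ is then a torsion-free subgroup of finite index in $G$ isomorphic to the finite-index subgroup $Q''$ of $P$, and so $G$ is commensurable with $P$.

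The main obstacle is really the statement that $G$ is virtually torsion-free; everything else is bookkeeping. The crux is that the extension class, although possibly nonzero, is killed after restriction to a cover of large enough degree, which relies on the two surface-group facts used above: that restriction is multiplication by the degree on top cohomology, and that covers of every degree exist. I expect the only points needing genuine care are the passage to $C_G(K)$ (so that the kernel becomes central and abelian, at the cost of shrinking $K$ to $\zeta K$) and the orientability reduction, which is precisely what makes the clean identification $H^2(Q;Z)\cong Z$ and the degree computation available.
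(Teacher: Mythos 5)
Your proof is correct and follows essentially the same route as the paper: reduce to a central extension of an orientable $PD_2$-group by a finite abelian group by passing to the centralizer of the finite kernel, then kill the extension class by restricting to a subgroup of suitable finite index, using that restriction on $H^2$ of an orientable surface group is multiplication by the degree. The only cosmetic differences are that the paper centralizes $K_G$ rather than $\ker f$ and takes the index to be the exponent of the kernel instead of the order of the class $\omega$.
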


\begin{proof}
Let $H=C_G(K_G)$. Then $[G:H]$ is finite and $K_H=H\cap{K_G}=\zeta{H}$ 
is a finite abelian group, of exponent $e$, say.
On passing to a subgroup of finite index in $H$, 
we may assume that $\overline{H}=H/K_H$ is an orientable $PD_2$-group.
Let $\widetilde{H}$ be a subgroup of index $e$ in $\overline{H}$.
Since $\overline{H}$ and $\widetilde{H}$ are orientable surface groups,
restriction from $H^2(\overline{H};K_H)\cong{K_H}$ to 
$H^2(\widetilde{H};K_H)\cong{K_H}$ is multiplication by  the degree $e$.
Hence the characteristic class of the central extension
$0\to{K_H}\to{H}\to\overline{H}\to1$ restricts to 0.
Thus $H$ is commensurable with $\widetilde{H}\times{K_H}$,
and hence with $\widetilde{H}$ and then $P$.
\end{proof}

In general, commensurability is  a stricter equivalence relation than the one
determined by homomorphisms with finite kernel and image of finite index.

\begin{theorem}
Let $G$ be a group with a presentation of deficiency $\geq1$.
Then either
\begin{enumerate}
\item  $K_G$ is the maximal finite normal subgroup of $G$; or

\item $\mathrm{def(G)}=1$, $c.d.G\leq2$ and $K_G=\sqrt{G}\cong\mathbb{Z}$ 
or $\mathbb{Z}^2$.
\end{enumerate}
In the latter case either $G\cong\mathbb{Z}^2$ or $\mathbb{Z}\rtimes_{-1}\mathbb{Z}$
or $G$ is commensurable with $\mathbb{Z}\times{F(2)}$.
\end{theorem}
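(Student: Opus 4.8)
The plan is to split on whether $K_G$ is finite or infinite. First I would record a general fact: for any finitely generated $G$ and any finite normal subgroup $F\trianglelefteq G$, the index $[G:C_G(F)]$ is finite (since $G/C_G(F)$ embeds in $\mathrm{Aut}(F)$), so the core $H$ of $C_G(F)$ has finite index and $F\leq C_G(H)$; by Lemma 10 this gives $F\leq K_G$. Hence every finite normal subgroup lies in $K_G$, so if $K_G$ is finite it is the maximal finite normal subgroup of $G$ and conclusion (1) holds. From now on I assume $K_G$ is infinite and aim for conclusion (2).

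Then $K_G$ is an infinite elementary amenable (Theorem 9(1)), hence amenable, normal subgroup, so $\beta_1^{(2)}(G)=0$. Combining the standard bound $\mathrm{def}(G)\leq 1+\beta_1^{(2)}(G)$ with the hypothesis $\mathrm{def}(G)\geq1$ gives $\mathrm{def}(G)=1$, while Lemma 2 applied contrapositively (as $\mathrm{def}(G)>0$) gives $c.d.\,G\leq2$. In particular $G$ is torsion free, so $K_G$ is torsion free; since $K_G'$ is locally finite by Theorem 9(1) it is trivial, whence $K_G$ is abelian and $K_G\leq\sqrt G$. As a torsion-free abelian group of cohomological dimension at most $2$ it has rank $1$ or $2$. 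It remains to show $K_G\cong\mathbb Z$ or $\mathbb Z^2$, that $K_G=\sqrt G$, and to identify $G$.

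The crucial structural point is that every element of $K_G$ has finite conjugacy class: if $a\in C_G(H)$ with $[G:H]<\infty$ then $C_G(a)\supseteq H$ has finite index. Thus $G$ acts on $K_G\otimes\mathbb Q$ so that every vector has finite orbit, forcing the image of $G$ in $GL(K_G\otimes\mathbb Q)\leq GL(2,\mathbb Q)$ to be a torsion subgroup and hence finite. Therefore $G_1=C_G(K_G)$ has finite index in $G$ and $K_G\leq\zeta G_1$. Choosing an infinite cyclic $C\leq K_G$, which is central and so normal in $G_1$, Bieri's theorem that a normal $\mathbb Z$ in a group of cohomological dimension at most $2$ has free quotient shows $G_1/C$ is free. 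Then $K_G/C$ is an abelian subgroup of a free group, hence trivial or infinite cyclic; the torsion-freeness of $G_1/C$ rules out any proper divisible part, so $K_G\cong\mathbb Z$ when it has rank $1$ and $K_G\cong\mathbb Z^2$ when it has rank $2$. In either case $K_G$ is finitely generated. (It is exactly this torsion-free free quotient that excludes radicals of Baumslag--Solitar type such as $\mathbb Z[1/n]$.)

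Finally I would assemble the cases, using that free groups with nontrivial centre are infinite cyclic and that central extensions of free groups split because $H^2(\mathrm{free};\mathbb Z)=0$. If $K_G\cong\mathbb Z^2$, taking $C$ a direct summand makes $G_1/C$ a free group with nontrivial centre, hence $G_1/C\cong\mathbb Z$ and $G_1\cong\mathbb Z^2$; then $G$ is virtually $\mathbb Z^2$, so $G\cong\mathbb Z^2$ or $\mathbb Z\rtimes_{-1}\mathbb Z$, and $\sqrt G\cong\mathbb Z^2=K_G$. If $K_G\cong\mathbb Z=C$, the split central extension gives $G_1\cong\mathbb Z\times F(r)$; for $r\geq2$ this makes $G$ commensurable with $\mathbb Z\times F(2)$ with $\sqrt G$ virtually $\mathbb Z$, so $\sqrt G\cong\mathbb Z=K_G$, while $r\leq1$ returns the virtually abelian situation already handled (the degenerate value $c.d.\,G=1$ giving $G\cong\mathbb Z$). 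Thus conclusion (2) holds. I expect the main obstacle to be the third paragraph: proving $K_G$ is finitely generated of rank at most $2$ and reducing $G_1$ to $\mathbb Z^2$ or $\mathbb Z\times F(r)$, where the appeal to Bieri's theorem on normal $\mathbb Z$-subgroups in cohomological dimension $2$ is the key external input and the torsion-free free quotient is what does the delicate work.
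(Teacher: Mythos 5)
Your overall architecture is viable and in places genuinely different from the paper's: the reduction ``$K_G$ finite $\Rightarrow$ (1)'' via finite normal subgroups lying in $K_G$ is correct, the derivation $\beta_1^{(2)}(G)=0$, $\mathrm{def}(G)=1$, $c.d.G\leq2$ matches the paper, and your finite-orbit argument forcing the image of $G$ in $GL(K_G\otimes\mathbb{Q})$ to be a torsion, hence finite, subgroup is an attractive substitute for the paper's appeal to Theorem 2.7 and Corollary 2.6 of \cite{Hi} (which split into the cases $\sqrt{G}\cong\mathbb{Z}$ and $G'$ abelian, the latter handled via $G\cong\mathbb{Z}*_m$). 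But your load-bearing external input is misquoted, and this creates a genuine gap. Bieri's Theorem 8.4 gives only that the quotient of a group of cohomological dimension $2$ by a normal infinite cyclic subgroup is \emph{virtually} free, not free, and centrality of $C$ does not improve this. A concrete counterexample within the scope of the theorem: the trefoil group $G=\langle x,y\mid x^2=y^3\rangle$ has deficiency $1$, $K_G=\zeta{G}=\langle x^2\rangle\cong\mathbb{Z}$, so your $G_1=C_G(K_G)$ is all of $G$ and $C=K_G$; yet $G_1/C\cong Z/2Z*Z/3Z$ has torsion and is not free, and the central extension does not split, since $\mathbb{Z}\times(Z/2Z*Z/3Z)$ has torsion while $G$ is torsion-free. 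Thus your claims ``$G_1/C$ is free'', ``$G_1/C$ is torsion-free'', and ``$G_1\cong\mathbb{Z}\times F(r)$'' all fail as stated --- exactly at the point you yourself flagged as doing the delicate work.

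The gap is repairable, and the repair is essentially what the paper does. With $G_1/C$ only virtually free (and finitely generated, since $[G:G_1]<\infty$), observe that $K_G/C$ is a torsion abelian subgroup in the rank-one case, hence meets a free normal subgroup of finite index trivially and so is finite, giving $K_G\cong\mathbb{Z}$ and excluding $\mathbb{Z}[1/m]$; in the rank-two case $K_G/C$ is an abelian subgroup of a virtually free group, hence finitely generated (virtually cyclic), giving $K_G\cong\mathbb{Z}^2$. This replaces your unjustified use of torsion-freeness of the quotient; compare the paper's line ``$K_G/\sqrt{G}$ is a torsion group, and is a normal subgroup of $G/\sqrt{G}$, [so] it is finite''. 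For the identification of $G$, pass to the preimage of a free subgroup of finite index in $G_1/C$; over a free group the central extension does split (as you say, by vanishing of $H^2$), producing a finite-index subgroup isomorphic to $\mathbb{Z}\times F(r)$, which is all the commensurability conclusion requires --- your stronger claim about $G_1$ itself is false but unnecessary. Two smaller points: the equality $\sqrt{G}\leq K_G$ needs the observation that $C_G(\sqrt{G})$ has finite index (index $\leq2$ when $\sqrt{G}\cong\mathbb{Z}$), so that Lemma 10 applies, rather than a bare assertion; and the edge case $G\cong\mathbb{Z}$ that you parked parenthetically is in fact also an edge case of the theorem's final trichotomy as literally stated.
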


\begin{proof} If $K_G$ is not the maximal finite normal subgroup of $G$ then $G$
has an infinite amenable normal subgroup, by Theorem 11.
Hence $\beta^{(2)}_1(G)=0$ \cite{CG}, 
and so $\mathrm{def(G)}=1$ and $c.d.G\leq2$,
by Corollary 2.4.1 of \cite{Hi}.
In this case $K_G$ is nontrivial and torsion free.
Let $k$ be a nontrivial element of $K_G$.
Then $k$ centralizes some subgroup $H$ of finite index in $G$.
We may assume that $H$ is normal in $G$.
Then $k^{[G:H]}$ is a nontrivial element of $\zeta{H}$, 
so $\sqrt{G}\not=1$.
Either $\sqrt{G}\cong\mathbb{Z}$ or $G'$ is abelian, by Theorem 2.7 of \cite{Hi}.

If $\sqrt{G}\cong\mathbb{Z}$ then $[G:C_G(\sqrt{G}]\leq2$
and $G/\sqrt{G}$ is virtually free, 
by Theorem 8.4 of \cite{Bi}.
(In particular, the preimage in $G$ of a free subgroup $F$ of finite index in 
$C_G(\sqrt{G}/\sqrt{G}$ is isomorphic to $\mathbb{Z}\times{F}$.)
Hence $\sqrt{G}\leq{K_G}$.
Since $K_G/\sqrt{G}$ is a torsion group,
and is a normal subgroup of $G/\sqrt{G}$ it is finite.
Since $K_G$ is torsion free it follows that $K_G\cong\mathbb{Z}$ and hence that
$K_G=\sqrt{G}$.

If $G'$ is abelian and $\sqrt{G}$ is not infinite cyclic
then $G\cong\mathbb{Z}*_m$, the ascending HNN
extension with presentation $\langle a,t\mid tat^{-1}=a^m\rangle$, 
for some $m\not=0$, by Corollary 2.6 of \cite{Hi}.
But it is easy to see that $K_G$ must then be trivial unless $m=\pm1$,
in which case $G\cong\mathbb{Z}^2$ or $\mathbb{Z}\rtimes_{-1}\mathbb{Z}$ 
and $K_G=\sqrt{G}\cong\mathbb{Z}^2$.
\end{proof}

If $\mathrm{def(G)}\geq1$ and $K<G$ is a finite normal subgroup must it be trivial?
This would be generally true if it is so when $K$ is central.
For if $z\in{K}$ then $\widetilde{G}=C_G(z)$ also has deficiency $\geq1$,
and the subgroup $\widetilde{K}=\langle{z}\rangle$ is central.
If $G$ has a minimal generating set $\{x_1,\dots,x_r\}$ and $Z$ has order $p>1$
then one might expect that the relations $z^p=1$ and $zx_i=x_iz$ for $1\leq{i}\leq{r}$ 
are independent, and so $\mathrm{def(\widetilde{G})}\leq0$.
Although it is not clear how to substantiate this expectation in general,
an argument similar to that of Theorem 12  may be used to confirm it
when $G/K$ is commensurate with a $PD_2$-group \cite{Hi02}.
The analogue for pro-$p$-groups was proven in \cite{HS08}.

\newpage

\end{document}